\numberwithin{equation}{section}
\newtheorem{theorem}{Theorem}[section]
\newtheorem{lemma}[theorem]{Lemma}
\newtheorem{remark}[theorem]{Remark}
\newtheorem{definition}[theorem]{Definition}
\renewcommand{\(}{\left(}
\renewcommand{\)}{\right)}
\newcommand{\no}{\nonumber}
\newcommand{\si}{\sigma}
\begin{document}
\title[SEMICIRCULAR LAW OF RANDOM QUATERNION MATRICES]{On the semicircular law of large dimensional random quaternion matrices}

\author{YANQING YIN,\ \ ZHIDONG BAI, \ \ JIANG HU }
\thanks{Z. D. Bai was partially supported by CNSF  11171057 and PCSIRT; J. Hu was partially supported by a grant CNSF 11301063.
}

\address{KLASMOE and School of Mathematics \& Statistics, Northeast Normal University, Changchun, P.R.C., 130024.}
\email{yinyq799@nenu.edu.cn}
\address{KLASMOE and School of Mathematics \& Statistics, Northeast Normal University, Changchun, P.R.C., 130024.}
\email{baizd@nenu.edu.cn}
\address{KLASMOE and School of Mathematics \& Statistics, Northeast Normal University, Changchun, P.R.C., 130024.}
\email{huj156@nenu.edu.cn}

\subjclass{Primary 15B52, 60F15, 62E20;
Secondary 60F17} \keywords{GSE,  Quaternion matrices,   Semicircular Law}

\maketitle
\begin{abstract}
It is well known that Gaussian symplectic ensemble (GSE)  is defined  on the space of $n\times n$ quaternion self-dual Hermitian matrices with Gaussian random elements. There is a huge body of literature regarding this kind of matrices based on the exact known form of density function of the eigenvalues (see \cite{ErdHos11U,ErdHosY11B,KnowlesY11E,TaoV11R,TaoV11W,ErdHosY12R}).  Due to the fact that multiplication of quaternions is not commutative, few works about large dimensional quaternion self-dual Hermitian  matrices are seen without normality assumptions.  As natural,  we shall  get  more  universal results  by  removing  the Gaussian condition.  For the first step, in this paper
 we prove that   the empirical  spectral distribution of the common quaternion self-dual Hermitian matrices tends to semicircular law.
 The main tool to establish the  universal result is  given as a lemma in this paper as well.

\end{abstract}

\section{introduction and main results}

Suppose  $H_n$ is an $n \times n$ Hermitian matrix with eigenvalues  ${s _i}, i = 1, 2,  \cdots n$.   The empirical spectral distribution (ESD) of  $H_n$ is defined as:
$${F^{H_n}}(x) = \frac{1}{n}\sum\limits_{i = 1}^n {I({s _i} \le x)} $$
where  ${I(\cdot})$ is  indicator function.
It is   shown  that if the entries   on and above the diagonal of $H_n$ (known as Wigner matrix) are independent random variables with zero-mean and
variance $n^{-1}\si^2$, then $F^{H_n}$
converges almost surely (a.s.) to  a non-random
distribution $F$ whose density function is given by 
\begin{align}
    f(x)=\frac{1}{2\pi\si^2}\sqrt{4\si^2-x^2},~~x\in[-2\sigma,2\sigma].\label{01}
\end{align}This is also known as the  semicircular law (see \cite{Wigner55C}).
Wigner matrices and semicircular law play  important roles in physics and pure mathematics. Thus in resent years, there are a lot of  subsequent work which were  trying to obtain a better understanding of Wigner matrices and  semicircular law. Much more details can be found in \cite{Mehta04R,TulinoV04R,BaiC09R,AndersonG10I,CouilletD11R} and references therein.

In mathematics, the quaternions  were first described by Irish mathematician William Rowan Hamilton in 1843 \cite{Hamilton44Q}, and applied to mechanics in three-dimensional space. In recent years, quaternions are found uses in both theoretical and applied mathematics, such as  in three-dimensional computer graphics and computer vision, the theory of
peg-top using, navigation, enginery and organ, robot technology and artificial satellite attitude control, and so on.
However,  random quaternion  matrices have not been studied as substantially as those in real and complex fields, except for Gaussian Symplectic Ensemble (GSE).

 The GSE is defined on the space of $n\times n$ quaternion self-dual Hermitian matrices. For a matrix $X = ({x_{jk}})_{n\times n}$ drawn from the GSE, where
$${x_{jk}}=a_{jk}+b_{jk}\cdot i_1+c_{jk}\cdot i_2+d_{jk}\cdot i_3$$ is a random quaternion.
 Here $\{  i_1 , i_2 , i_3 \}$ denotes the standard quaternion basis with
\begin{gather}
i_1^2=i_2^2=i_3^2=-1,\quad i_1=i_2i_3=-i_3i_2,\nonumber\\
 i_2=i_1i_3=-i_3i_1,\quad i_3=i_1i_2=-i_2i_1.\label{eqb1}
\end{gather}
The four coefficients $\{a_{jk},b_{jk},c_{jk},d_{jk}\}$  are  independent real Gaussian random variables with zero mean. For $j > k$,
\begin{align*}
    E a_{jk}^2=Eb_{jk}^2=Ec_{jk}^2=Ed_{jk}^2=1/4,
\end{align*}
  and for the diagonal elements $E{x^2_{jj}} = Ea^2_{jj} =1$.
It was shown that the ESD of GSE tends to semicircular law almost surely, and there are also many local and bulk results about GSE. All these results are derived based on the fact that the exact form of eigenvalues of GSE. Details can be found in \cite{ErdHos11U,ErdHosY11B,KnowlesY11E,TaoV11R,TaoV11W,ErdHosY12R,BaiH11N}.

 Therefore it motives us to investigate the  universal results of the quaternion self-dual Hermitian matrices under only necessary moment conditions. Before giving the main theorem, we introduce some notation  and basic properties of  quaternion. Define
\begin{displaymath}
\mathbf e = \left( \begin{array}{cc}
1&0\\
0&1\\
\end{array} \right), \quad
\mathbf{i} = \left( \begin{array}{cc}
i&0\\
0&- i\\
\end{array} \right), \quad
\mathbf j = \left( \begin{array}{cc}
0&1\\
-1&0\\
\end{array}\right), \quad
\mathbf{k} = \left( \begin{array}{cc}
0&i\\
i&0\\
\end{array}\right),
\end{displaymath}
where $i=\sqrt{-1}$.
It's easy to verify that
 \begin{gather}
\mathbf{i} ^2=\mathbf{j} ^2=\mathbf{k} ^2=-\mathbf{e},\quad \mathbf{i} =\mathbf{j} \mathbf{k} =-\mathbf{k} \mathbf{j} ,\nonumber\\
 \mathbf{j} =\mathbf{i} \mathbf{k} =-\mathbf{k} \mathbf{i} ,\quad \mathbf{k} =\mathbf{i} \mathbf{j} =-\mathbf{j} \mathbf{i} .\label{eqb2}
\end{gather}
The relations (\ref{eqb1}) and (\ref{eqb2}) establish an isomorphic relation between the quaternion defined by the linear combinations of $\{1,i_1,i_2,i_3\}$ and that by $\{{\bf e}, {\bf i,j,k}\}$.
 That is, a quaternion $$x=a+bi_1+ci_2+di_3$$can be represented as $$x = a \cdot \mathbf e + b \cdot \mathbf i + c \cdot \mathbf j + d \cdot \mathbf k = \left( {\begin{array}{*{20}{c}}
\lambda &\omega \\
{ - \overline \omega }&{\overline \lambda }
\end{array}} \right), $$
where $a, b, c, d$ are  real  and $\lambda  = a + bi$, $\omega  = c + di$ are complex. Here $i=\sqrt{-1}$ denotes the usual imaginary unit. The quaternion conjugate of $x$ is defined by
$$ \bar x = a \cdot \mathbf e - b \cdot \mathbf i - c \cdot \mathbf j - d \cdot \mathbf k = \left( {\begin{array}{*{20}{c}}
{\bar \lambda }&{ - \omega }\\
{\bar \omega }&\lambda
\end{array}} \right), $$
  and its norm  is defined by $$\| x \| = \sqrt {{a^2} + {b^2} + {c^2} + {d^2}}  = \sqrt {{{\left| \lambda  \right|}^2} + {{\left| \omega  \right|}^2}}. $$

\begin{remark}
Apparently, an $n\times n$ Hermitian quaternion  matrix $X = ({x_{jk}})_{n\times n}$  can be represent as a $2n \times 2n$ Hermitian matrix (see Section  2.4 in \cite{Mehta04R}).  That is,  we represent the entries of $W_n$ as
$$x_{jk}^{(n)} = \left( {\begin{array}{*{20}{c}}
a^{(n)}_{jk} + b^{(n)}_{jk}i & c^{(n)}_{jk} + d^{(n)}_{jk}i \\
-{c}^{(n)}_{jk}+{d}^{(n)}_{jk}i&{a}^{(n)}_{jk}-{b}^{(n)}_{jk}i
\end{array}} \right)
=\left( {\begin{array}{*{20}{c}}
\lambda^{(n)}_{jk} &\omega^{(n)}_{jk} \\
{ - \overline \omega }^{(n)}_{jk}&{\overline \lambda }^{(n)}_{jk}
\end{array}} \right), 1\le j<k\le n, $$
 and
$x_{jj}^{(n)} = \left( {\begin{array}{*{20}{c}}
a^{(n)}_{jj} & 0 \\
0& a^{(n)}_{jj}
\end{array}} \right).$ Then, $W_n$ is represented as a $2n\times 2n $ Hermitian complex matrix,  denoted  by $W_n^R$.  It is well known (see \cite{Zhang97Q}) that the multiplicities  of all the eigenvalues of  $W^R_n$ are even and at least 2. Taking  one from each of the $n$ pairs of eigenvalues $W_n^R$, the $n$ values are defined as the eigenvalues of $W_n$. Throughout the rest of this paper, we still use $W_n$ to denote  the
 represented one and omit the superscript $^{(n)}$ from the notations  for brevity.
 \end{remark}
The theorem can be described as following:
\begin{theorem}\label{th1}
Suppose that $W_n:=\frac{1}{{\sqrt n }}{X_n}= \frac{1}{{\sqrt n }} \({x_{jk}^{(n)}}\)_{n\times n}$, where $x_{jk}^{(n)}=a^{(n)}_{jk}+b^{(n)}_{jk} i_1+c^{(n)}_{jk} i_2+d^{(n)}_{jk} i_3$,  is a quaternion self-dual Hermitian matrix whose entries above and on the diagonal are independent and satisfy: \begin{itemize}
  \item[(i)] ${\rm E} x^{(n)}_{jk}=0,\mbox{ for all }1\le j\le k\le n.$
  \item[(ii)] ${\rm E} \|x^{(n)}_{jj}\|^2<M, ~{\rm E} \|x^{(n)}_{jk}\|^2=1, \mbox{ for all }1\le j< k\le n.$
  \item[(iii)] For any constant $\eta > 0$,
\begin{equation}\label{eq:1}
  \mathop {\lim }\limits_{{\rm{n}} \to \infty } \frac{1}{{{n^2}}}{\sum\limits_{jk} {\mbox{\rm E}\left\| {x_{jk}^{(n)}} \right\|} ^2}I(\left\| {x_{jk}^{(n)}} \right\| \ge \eta \sqrt n ) = 0.
\end{equation}
\end{itemize}
Then we have as $n\to\infty$, the ESD of $W_n$
converges to   semicircular
law almost surely.
\end{theorem}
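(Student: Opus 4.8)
The plan is to establish convergence of the empirical spectral distribution (ESD) via the Stieltjes transform method, the standard route for semicircular-type limit theorems. Write $s_n(z) = \frac{1}{2n}\operatorname{tr}(W_n - zI)^{-1}$ for $z = u + iv$ with $v > 0$, where the trace is over the $2n \times 2n$ complex representation $W_n$; since the eigenvalues come in identical pairs, this equals $\frac1n$ times the trace over the $n$ ``genuine'' quaternionic eigenvalues, so convergence of $s_n(z)$ to $m(z) = \frac{-z + \sqrt{z^2 - 4}}{2}$, the Stieltjes transform of the semicircular law in \eqref{01} with $\sigma = 1$, is equivalent to the claimed convergence of $F^{W_n}$. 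The argument splits into two parts: (a) show $\mathrm{E}\, s_n(z) \to m(z)$ for each fixed $z$; (b) upgrade to almost sure convergence by a concentration argument.

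First I would perform the usual truncation and centralization: using condition (iii), replace $x_{jk}^{(n)}$ by $x_{jk}^{(n)} I(\|x_{jk}^{(n)}\| < \eta_n \sqrt n)$ for a suitable $\eta_n \downarrow 0$, then recenter and renormalize, showing by rank and norm inequalities (Hoffman--Wielandt / Bai's inequalities, adapted to the $2n\times 2n$ representation) that these operations perturb the ESD negligibly; this reduces matters to uniformly bounded entries with the moment normalization in (ii). Next, for the truncated matrix I would expand $s_n(z)$ using the Schur complement / resolvent identity: writing $W_n$ in block form by peeling off the $2\times 2$ block corresponding to one quaternion diagonal entry, the $(j,j)$ $2\times 2$ diagonal block of $(W_n - zI)^{-1}$ is $\bigl(x_{jj}/\sqrt n - zI_2 - \alpha_j^* (W_n^{(j)} - zI)^{-1}\alpha_j\bigr)^{-1}$, where $W_n^{(j)}$ is the corresponding $(2n-2)\times(2n-2)$ minor and $\alpha_j$ its $j$-th block column. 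The key structural fact — which is where the quaternion (symplectic) structure enters and must be used carefully — is that the quadratic form $\alpha_j^* (W_n^{(j)} - zI)^{-1}\alpha_j$ concentrates around $\frac{1}{2n}\operatorname{tr}(W_n^{(j)} - zI)^{-1} \cdot I_2$, i.e. around a \emph{scalar} multiple of the identity, because the self-dual structure forces the off-diagonal $2\times 2$ entries of the expectation to vanish. I would prove this via a martingale/variance estimate on the quadratic form (a quaternion analogue of the standard Lemma used for Wigner matrices — presumably the ``main tool'' advertised as a lemma in the abstract). Granting this, one obtains the self-consistent equation $\mathrm{E}\, s_n(z) = \frac{-1}{z + \mathrm{E}\, s_n(z)} + o(1)$, whose only solution in the upper half-plane with the correct sign of imaginary part is $m(z) + o(1)$; a continuity/uniqueness argument then gives $\mathrm{E}\, s_n(z) \to m(z)$.

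For almost sure convergence I would use the martingale decomposition $s_n(z) - \mathrm{E}\, s_n(z) = \sum_{k} (\mathrm{E}_k - \mathrm{E}_{k-1}) s_n(z)$, where $\mathrm{E}_k$ conditions on the first $k$ quaternion rows/columns; each martingale difference is bounded by $O(1/(nv))$ by the resolvent rank-one (here rank-two) perturbation bound, so Burkholder's inequality gives $\mathrm{E}\,|s_n(z) - \mathrm{E}\, s_n(z)|^4 = O(n^{-2})$, and Borel--Cantelli finishes the pointwise a.s.\ statement; a standard argument over a countable dense set of $z$ plus the fact that Stieltjes transforms determine distributions then yields a.s.\ weak convergence of $F^{W_n}$ to the semicircular law.

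The main obstacle I expect is the quadratic-form concentration lemma in its matrix-valued ($2\times 2$-block) form: one must control $\alpha_j^*(W_n^{(j)}-zI)^{-1}\alpha_j$ as a $2\times 2$ matrix and show both that its expectation is asymptotically scalar (this uses the precise self-dual covariance structure of the quaternion entries, not just \eqref{eqb1}--\eqref{eqb2}) and that its fluctuations are $o(1)$, with the non-commutativity of quaternion multiplication meaning the cross terms in the quadratic form do not simplify as in the real/complex case. Handling this correctly — effectively proving the right analogue of the Marchenko--Pastur/Wigner quadratic-form lemma over the quaternions — is the technical heart of the paper, and the rest is adaptation of the classical Stieltjes-transform machinery to the $2n \times 2n$ representation.
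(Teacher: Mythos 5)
Your overall skeleton (truncation/centralization/rescaling, Schur-complement expansion of the $2\times 2$ diagonal blocks of the resolvent, the self-consistent equation ${\rm E}s_n(z)=-1/(z+{\rm E}s_n(z))+o(1)$, and the martingale--Burkholder--Borel--Cantelli argument for almost sure convergence) is exactly the route the paper takes. However, there is a genuine gap at the step you yourself flag as the technical heart. You propose to show that $\frac1n Q_k^*R_kQ_k$ is \emph{approximately} scalar by computing its conditional expectation given $R_k=(W_n^{(k)}-zI_{2n-2})^{-1}$ and arguing that ``the self-dual covariance structure forces the off-diagonal entries of the expectation to vanish,'' plus a variance estimate. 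That computation does not close under hypotheses (i)--(iii). Writing the $j$-th $2\times2$ diagonal block of $R_k$ as $\bigl(\begin{smallmatrix}e_{jj}(k)&g_{jj}(k)\\ h_{jj}(k)&f_{jj}(k)\end{smallmatrix}\bigr)$, the conditional expectation of the off-diagonal form $\frac1n\alpha_k^*R_k\beta_k$ is a sum of terms of the shape $(e_{jj}(k)-f_{jj}(k))\,{\rm E}(\omega_{jk}\bar\lambda_{jk})$, $g_{jj}(k)\,{\rm E}(\bar\lambda_{jk}^2)$, $h_{jj}(k)\,{\rm E}(\omega_{jk}^2)$; the theorem assumes only ${\rm E}x_{jk}=0$ and ${\rm E}\|x_{jk}\|^2=1$, with no constraint on the joint covariance of $a_{jk},b_{jk},c_{jk},d_{jk}$, so the entry moments ${\rm E}(\omega\bar\lambda)$, ${\rm E}(\bar\lambda^2)$, ${\rm E}(\omega^2)$ need not vanish. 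Hence the conditional expectation is asymptotically scalar only if $e_{jj}(k)=f_{jj}(k)$ and $g_{jj}(k)=h_{jj}(k)=0$, i.e.\ only if the diagonal $2\times2$ blocks of the minor's resolvent are themselves scalar --- which is precisely the fact you were trying to prove, not a consequence of a covariance computation.

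What is actually needed (and what the paper's advertised ``main tool,'' Lemma~\ref{lemma:1}, supplies) is a \emph{deterministic} linear-algebra fact: the characteristic matrix $W_n-zI_{2n}$ of a quaternion self-dual matrix is of ``Type-II,'' and the inverse of any invertible Type-II matrix is of ``Type-I''; in particular all $2\times2$ diagonal blocks of $(W_n-zI_{2n})^{-1}$ and of $(W_n^{(k)}-zI_{2n-2})^{-1}$ are exact scalar multiples of $I_2$. This is proved by induction on $n$ via the Schur complement, with no probability involved. Combined with the Schur-complement identity for the $k$-th block, it gives the \emph{exact} identities \eqref{eq1} and \eqref{eq:6}: $\alpha_k^*R_k\beta_k\equiv 0$ and $\alpha_k^*R_k\alpha_k\equiv\beta_k^*R_k\beta_k$, so no ``matrix-valued concentration around a scalar'' is needed at all; the only probabilistic input left is the standard scalar quadratic-form variance bound for ${\rm tr}\,Q_k^*R_kQ_k-{\rm tr}\,R_k$, together with ${\rm E}|{\rm tr}\,\varepsilon_k^2|\to0$ and the bound $\Im(-z-\frac1n\alpha_k^*R_k\alpha_k)\le-\upsilon$ used to invert the (now exactly scalar) $2\times2$ block. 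So the missing idea in your proposal is this deterministic structural lemma about resolvents of self-dual matrices; without it, the expectation step in your concentration argument fails under the stated moment assumptions, and no amount of variance estimation repairs it.
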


 \begin{remark}
   Actually, the Hermitian quaternion matrix $W_n$ can be viewed as a $2n\times 2n$ Wigner complex matrix with dependent entries.  For recent
progress in this direction, we refer to \cite{GoetzeN12S,ChakrabartyH13L} and references therein.
 \end{remark}

\begin{remark}\label{rem15}
Note that condition (\ref{eq:1}) is equivalent to: for any $\eta > 0$,
   \begin{equation}\label{eq:2}
    \mathop {\lim }\limits_{n \to \infty } \frac{1}{{{\eta ^2}n^2}}{\sum\limits_{jk} {{\rm E}\| {x_{jk}^{(n)}} \|} ^2}I(\| {x_{jk}^{(n)}} \| \ge \eta \sqrt n ) = 0.
   \end{equation}
   Thus we can select a sequence ${\eta _n} \downarrow 0$ such that (\ref{eq:2}) remains true when $\eta$ is replaced by $\eta_n$.
\end{remark}

  The remainder of this paper is organized as follows. A  main mathematical tool of proving the theorem is established in Section 2.  Theorem \ref{th1} is  proved in Section 3 and some technical lemmas are given in Section 4.

\section{ The  main tool}

As in the real and complex case, we shall also use the Stieltjes transform to prove the main theorem. Note that the characteristic matrix $W^{R}_n-zI_{2n}$ of the quaternion self-dual Hermitian matrix $W^R_n$ is no longer a matrix of quaternions and neither its resolvent.  In the proof of Theorem \ref{th1}, we need the fact that the diagonal elements of the resolvent $(W_n^R-zI_{2n})^{-1}$ are pairwise identical.    
In this section,  we will give a lemma of this fact which is the key tool  in the  proof of  Theorem \ref{th1}. Before that, we introduce some definitions firstly.

\begin{definition}
A matrix is called Type-T matrix if it has the following structure:$$\left( {\begin{array}{*{20}{c}}
t&0\\
0&t
\end{array}} \right),$$
where $t$ is a complex number.\end{definition}
\begin{definition} A matrix is called Type-\uppercase\expandafter{\romannumeral1} matrix if it has the following structure:
\[\left( {\begin{array}{*{20}{c}}
{{t_1}}&0&{{a_{12}}}&{{b_{12}}}& \cdots &{{a_{1n}}}&{{b_{1n}}}\\
0&{{t_1}}&{{c_{12}}}&{{d_{12}}}& \cdots &{{c_{1n}}}&{{d_{1n}}}\\
{{d_{12}}}&{ - {b_{12}}}&{{t_2}}&0& \cdots &{{a_{2n}}}&{{b_{2n}}}\\
{ - {c_{12}}}&{{a_{12}}}&0&{{t_2}}& \cdots &{{c_{2n}}}&{{d_{2n}}}\\
 \vdots & \vdots & \vdots & \vdots & \ddots & \vdots & \vdots \\
{{d_{1n}}}&{ - {b_{1n}}}&{{d_{2n}}}&{ - {b_{2n}}}& \cdots &{{t_n}}&0\\
{ - {c_{1n}}}&{{a_{1n}}}&{ - {c_{2n}}}&{{a_{2n}}}& \ldots &0&{{t_n}}
\end{array}} \right),\]
where $t_i$, $a_{ij},b_{ij},c_{ij},d_{ij}$ are all complex numbers.
\end{definition}
\begin{definition}A matrix is called Type-\uppercase\expandafter{\romannumeral2} matrix if it has the following structure:
\begin{footnotesize}
\[\left( {\begin{array}{*{20}{c}}
{{t_1}}&0&{{a_{12}} + {c_{12}}  i}&{{b_{12}} + {d_{12}}  i}& \cdots &{{a_{1n}} + {c_{1n}}  i}&{{b_{1n}} + {d_{1n}}  i}\\
0&{{t_1}}&{ - {{\bar b}_{12}} - {{\bar d}_{12}}  i}&{{{\bar a}_{12}} + {{\bar c}_{12}}  i}& \cdots &{ - {{\bar b}_{1n}} - {{\bar d}_{1n}}  i}&{{{\bar a}_{1n}} + {{\bar c}_{1n}}  i}\\
{{{\bar a}_{12}} + {{\bar c}_{12}}  i}&{ - {b_{12}} - {d_{12}}  i}&{{t_2}}&0& \cdots &{{a_{2n}} + {c_{2n}}  i}&{{b_{2n}} + {d_{2n}}  i}\\
{{{\bar b}_{12}} + {{\bar d}_{12}}  i}&{{a_{12}} + {c_{12}}  i}&0&{{t_2}}& \cdots &{ - {{\bar b}_{2n}} - {{\bar d}_{2n}}  i}&{{{\bar a}_{2n}} + {{\bar c}_{2n}}  i}\\
 \vdots & \vdots & \vdots & \vdots & \ddots & \vdots & \vdots \\
{{{\bar a}_{1n}} + {{\bar c}_{1n}}  i}&{ - {b_{1n}} - {d_{1n}}  i}&{{{\bar a}_{2n}} + {{\bar c}_{2n}}  i}&{ - {b_{2n}} - {d_{2n}} i}& \cdots &{{t_n}}&0\\
{{{\bar b}_{1n}} + {{\bar d}_{1n}}  i}&{{a_{1n}} + {c_{1n}}  i}&{{{\bar b}_{2n}} + {{\bar d}_{2n}}  i}&{{a_{2n}} + {c_{2n}}  i}& \ldots &0&{{t_n}}
\end{array}} \right).\]
\end{footnotesize}
Here $i$ is the usual imaginary unit and all the other notations denote  complex numbers.
\end{definition}


\begin{definition} Let $A_{k1}$ and $A_{k2}$ ($1\leq k\leq n$) be  $2\times2$ complex matrices, and have the structure that   $A_{k1} = \left( {\begin{array}{*{20}{c}}
a_{k1}&a_{k2}\\
a_{k3}&a_{k4}
\end{array}} \right)_{2\times2}$
 and
$A_{k2} = \left( {\begin{array}{*{20}{c}}
a_{k4}&{ - a_{k2}}\\
{ - a_{k3}}&a_{k1}
\end{array}} \right)_{2\times2}$. Then  we denote $A_1\stackrel{d}\leftrightarrow A_2$, if
$$A_1 = \left(
{{A_{11}}},\cdots,{{A_{n1}}}
\right)_{2\times 2n},\quad  A_2 = \left(
{{A'_{12}}},\cdots,{A'_{n2}}\right)_{2n\times 2}'.$$
Let $B_{k}$ and $C_{k}$ be any $2\times2$ complex matrices, $1\leq k\leq n$. Then we denote $D_1\stackrel{u}\leftrightarrow D_2$,   if
$$D_1 = \left(
B_1+C_1\cdot i,\cdots,B_n+C_n\cdot i
\right)_{2\times 2n},$$and$$D_2 = \left(
B_1^{*'}+C_1^{*'}\cdot i,\cdots,B_n^{*'}+C^{*'}_n\cdot i
\right)_{2n\times 2}'.$$
Here superscript $'$ and $^*$ stand for the transpose and complex conjugate transpose of a matrix respectively.
\end{definition}

Here, for readers' convenient, we shall give some tips to understand these definitions. Let $M_n=(B_{jk})_{2n\times 2n}$ be a $2n\times2n$ block matrix consist of $n^2$ $2\times 2$ blocks $B_{jk}$, $j,k=1,\cdots, n$. Let $M_{(j,:)}=(B_{j,1},\cdots,B_{j,n})$ denote the $j$-th block row of $M_n$ and let $M_{(:,k)}=(B_{1,k}',\cdots,B_{n,k}')'$ denote the $k$-th block column of $M_n$, then we have
\begin{description}
  \item[(1)] if $M_n$ is a Type-\uppercase\expandafter{\romannumeral1} matrix, then for any $j$, $B_{jj}$ are all Type-T matrices and $M_{(j,:)}\stackrel{d}\leftrightarrow  M_{(:,j)}$,
  \item[(2)] if $M_n$ is a Type-\uppercase\expandafter{\romannumeral2} matrix, then for any $j$, $B_{jj}$ are all Type-T matrices and $M_{(j,:)}\stackrel{u}\leftrightarrow  M_{(:,j)}$.
\end{description}

Now we are in position to present the following important lemma which is the main tool to prove Theorem \ref{th1}.

\begin{lemma}\label{lemma:1}
For all $n\geq1$, if $\Omega_n$ is  an invertible complex $2n\times 2n$ matrix of Type-\uppercase\expandafter{\romannumeral2} matrix, then $\Omega_n^{-1}$ is of Type-\uppercase\expandafter{\romannumeral1}.
\end{lemma}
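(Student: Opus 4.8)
The plan is to exhibit a single $\mathbb{C}$-linear involution $M\mapsto M^{\flat}$ on the algebra of $2n\times 2n$ complex matrices that is also an algebra anti-automorphism, whose fixed-point set is exactly the collection of Type-\uppercase\expandafter{\romannumeral1} matrices, and which moreover fixes every Type-\uppercase\expandafter{\romannumeral2} matrix. Granting such a $\flat$, the lemma is immediate: any bijective anti-homomorphism $\phi$ satisfies $\phi(M^{-1})=\phi(M)^{-1}$ for invertible $M$ (since $\phi(M^{-1})\phi(M)=\phi(MM^{-1})=\phi(I)=I$ and similarly on the other side), so if $\Omega_n$ is invertible and Type-\uppercase\expandafter{\romannumeral2} then $\Omega_n^{\flat}=\Omega_n$ gives $(\Omega_n^{-1})^{\flat}=(\Omega_n^{\flat})^{-1}=\Omega_n^{-1}$, i.e.\ $\Omega_n^{-1}$ is $\flat$-fixed, hence Type-\uppercase\expandafter{\romannumeral1}.

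First I would record the relevant facts about the $2\times2$ adjugate. For $A=\left(\begin{smallmatrix}a&b\\c&d\end{smallmatrix}\right)$ put $A^{\#}=\left(\begin{smallmatrix}d&-b\\-c&a\end{smallmatrix}\right)$. Then $A\mapsto A^{\#}$ is $\mathbb{C}$-linear, $(A^{\#})^{\#}=A$, and $A^{\#}A=AA^{\#}=(\det A)\mathbf e$. From the last identity one gets $(AB)^{\#}=B^{\#}A^{\#}$: for invertible $A,B$, both $(AB)^{\#}(AB)$ and $(B^{\#}A^{\#})(AB)$ equal $(\det A)(\det B)\mathbf e$, so cancel $AB$; the general case follows by continuity (or by a direct $2\times2$ expansion). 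Also $A=A^{\#}$ holds exactly when $A$ is a Type-T matrix. Now regard a $2n\times2n$ matrix $M$ as an $n\times n$ array of $2\times2$ blocks $M=(M_{jk})$ and define $M^{\flat}$ blockwise by $(M^{\flat})_{jk}=(M_{kj})^{\#}$. Linearity of $\#$ lets it pass through the block sum $\sum_{\ell}M_{j\ell}N_{\ell k}$, and then $(AB)^{\#}=B^{\#}A^{\#}$ gives $(MN)^{\flat}=N^{\flat}M^{\flat}$; likewise $(M^{\flat})^{\flat}=M$ and $I_{2n}^{\flat}=I_{2n}$. Thus $\flat$ is the desired linear anti-automorphism.

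Next I would match the two matrix classes to $\flat$. Reading off the displayed pattern, a Type-\uppercase\expandafter{\romannumeral1} matrix is precisely one whose diagonal blocks are Type-T and whose blocks satisfy $M_{kj}=(M_{jk})^{\#}$ for $j<k$; since $A=A^{\#}$ characterizes Type-T, this is exactly the statement $M^{\flat}=M$. So the Type-\uppercase\expandafter{\romannumeral1} matrices are exactly the $\flat$-fixed ones. For a Type-\uppercase\expandafter{\romannumeral2} matrix the diagonal blocks are again Type-T, and a one-line computation shows the lower blocks are the adjugates of the upper ones: $\left(\begin{smallmatrix}a_{jk}+c_{jk}i & b_{jk}+d_{jk}i\\ -\bar b_{jk}-\bar d_{jk}i & \bar a_{jk}+\bar c_{jk}i\end{smallmatrix}\right)^{\#}=\left(\begin{smallmatrix}\bar a_{jk}+\bar c_{jk}i & -b_{jk}-d_{jk}i\\ \bar b_{jk}+\bar d_{jk}i & a_{jk}+c_{jk}i\end{smallmatrix}\right)$, which is exactly the $(k,j)$ block. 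Hence every Type-\uppercase\expandafter{\romannumeral2} matrix is also $\flat$-fixed, and together with the previous paragraph this finishes the proof.

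I do not expect a genuinely hard step. The one point worth care is the product-reversing identity $(AB)^{\#}=B^{\#}A^{\#}$ for $2\times2$ matrices: this is what makes $\flat$ an anti-automorphism, and therefore compatible with inversion, while everything else is bookkeeping with the block structure. It is also worth noting that $\flat$ records only the weaker ``$\flat$-symmetry'' and not the finer form of the off-diagonal blocks of a Type-\uppercase\expandafter{\romannumeral2} matrix, which is consistent with $\Omega_n^{-1}$ being Type-\uppercase\expandafter{\romannumeral1} but in general not Type-\uppercase\expandafter{\romannumeral2}. A less slick alternative would be to induct on $n$ via the Schur-complement formula for $\Omega_n^{-1}$, checking how the relations $\stackrel{d}{\leftrightarrow}$ and $\stackrel{u}{\leftrightarrow}$ propagate when a $2\times2$ block is peeled off; the involution $\flat$ merely packages this computation once and for all.
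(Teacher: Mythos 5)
Your proof is correct, and it takes a genuinely different route from the paper's. The paper argues by induction on $n$: it splits off the first $2\times2$ block, applies the block-inversion formula of Lemma \ref{lemma:2}, and verifies in three separate computations that the Schur complement is again Type-\uppercase\expandafter{\romannumeral2}, that the $(1,1)$ block of the inverse is Type-T, and that the off-diagonal block row/column of the inverse are paired by $\stackrel{d}\leftrightarrow$; it must also assume the pivot $t_1\neq 0$ and dispose of the degenerate case by an $\varepsilon$-perturbation and continuity. You instead introduce the blockwise adjugate-transpose $M\mapsto M^{\flat}$, $(M^{\flat})_{jk}=(M_{kj})^{\#}$, check it is a $\mathbb{C}$-linear involutive anti-automorphism (equivalently $M^{\flat}=J_nM'J_n^{-1}$ with $J_n=I_n\otimes\mathbf j$, which makes the anti-multiplicativity immediate), identify its fixed-point set with the Type-\uppercase\expandafter{\romannumeral1} matrices, and observe that every Type-\uppercase\expandafter{\romannumeral2} matrix is fixed; since an anti-automorphism fixing $I_{2n}$ commutes with inversion, the lemma follows at once. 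Your identifications are accurate: $A=A^{\#}$ characterizes Type-T, the relation $\stackrel{d}\leftrightarrow$ is exactly $A_{k2}=A_{k1}^{\#}$, and the lower blocks of a Type-\uppercase\expandafter{\romannumeral2} matrix are formally the adjugates of the upper ones, so both class descriptions match the displayed patterns. What your approach buys: no induction, no Schur complements, no nonzero-pivot assumption (the paper's continuity fix, phrased as the entries of $\Omega_m^{-1}$ being ``polynomials of $t_1$,'' is itself a little loose), and a conceptual reason why Type-\uppercase\expandafter{\romannumeral1} is the correct target class. What the paper's longer computation buys is essentially only the explicit $2\times2$ block formulas of the same shape that reappear in the resolvent analysis of Theorem \ref{th1}; since that proof only invokes the statement of the lemma, nothing needed later is lost by your route.
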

\begin{proof}

We will prove the lemma  by induction.
  First of all,  we can easily verify that the conclusion is correct when $n =1$ and $2$.
  Now,  suppose the conclusion is true when $n = m\left( m\ge 2\right)$. Then let $n = m + 1$ and suppose $t_1 \neq 0$.  Otherwise, one may add a small constant $\varepsilon>0$ to $t_1$ and then let $\varepsilon \to 0$ in the resulting Type I matrix. 

  Write:
$\begin{array}{l}
\Omega_{m+1} = \left( {\begin{array}{*{20}{c}}
{{\Sigma _{11}}}&{{\Sigma _{12}}}\\
{{\Sigma _{21}}}&{{\Sigma _{22}}}
\end{array}} \right)
\end{array}$ , where ${\Sigma _{11}} = \left( {\begin{array}{*{20}{c}}
{{t_1}}&0\\
0&{{t_1}}
\end{array}} \right)$,
$$\Sigma_{12}=\left(\begin{array}{ccccc}
a_{12}+c_{12}i&b_{12}+d_{12}i&\cdots&a_{1,m+1}+c_{1,m+1}i&b_{1,m+1}+d_{1,m+1}i\\
-\bar b_{12}-\bar d_{12}i&\bar a_{12}+\bar c_{12}i&\cdots&-\bar b_{1,m+1}-\bar d_{1,m+1}i&\bar a_{1,m+1}+\bar c_{1,m+1}i
\end{array}\right),$$
$$\Sigma_{21}=\left(\begin{array}{ccccc}
\bar a_{12}+\bar c_{12}i&\bar b_{12}+\bar d_{12}i&\cdots&\bar a_{1,m+1}+\bar c_{1,m+1}i&\bar b_{1,m+1}+\bar d_{1,m+1}i\\
-b_{12}- d_{12}i&a_{12}+c_{12}i&\cdots&- b_{1,k+1}- d_{1,m+1}i&a_{1,m+1}+ c_{1,m+1}i
\end{array}\right)',$$
$${\Sigma _{22}}
= \left( {\begin{array}{*{20}{c}}
{{t_2}}&0&{{a_{23}} + {c_{23}}i}&{{b_{23}} + {d_{23}}i}& \cdots\\
0&{{t_2}}&{ - {{\bar b}_{23}} - {{\bar d}_{23}}  i}&{{{\bar a}_{23}} + {{\bar c}_{23}}  i}& \cdots \\
{{{\bar a}_{23}} + {{\bar c}_{23}} i}&{ - {b_{23}} - {d_{23}}  i}&{{t_3}}&0& \cdots \\
{{{\bar b}_{23}} + {{\bar d}_{23}} i}&{{a_{23}} + {c_{23}}  i}&0&{{t_3}}& \cdots \\
 \vdots & \vdots & \vdots & \vdots & \ddots \\
\end{array}} \right)_{2m \times 2m} .
$$

According to Lemma \ref{lemma:2},  to complete the proof  it is sufficient to show that:
\begin{description}
  \item[(1)] ${\Sigma _{22}} - {\Sigma _{21}}\Sigma _{11}^{ - 1}{\Sigma _{12}}$ is a Type-\uppercase\expandafter{\romannumeral2} matrix.
  \item[(2)] ${\Sigma _{11}^{ - 1} + \Sigma _{11}^{ - 1}\Sigma _{12}^{}\Sigma _{22. 1}^{ - 1}\Sigma _{21}^{}\Sigma _{11}^{ - 1}}$ is a Type-T matrix.
  \item[(3)] ${ - \Sigma _{11}^{ - 1}\Sigma _{12}^{}\Sigma _{22. 1}^{ - 1}} \stackrel{d}\leftrightarrow { - \Sigma _{22. 1}^{ - 1}\Sigma _{21}^{}\Sigma _{11}^{ - 1}}$ .
\end{description}
We now proceed in our proof by taking  these three steps. Let
$A_{jk}=\left(\begin{array}{cc}
a_{jk} & b_{jk}\\
-\overline{b}_{jk} & \overline{a}_{jk}
\end{array}\right)_{2 \times 2},$
$B_{jk}=\left(\begin{array}{cc}
c_{jk} & d_{jk}\\
-\overline{d}_{jk} & \overline{c}_{jk}
\end{array}\right)_{2 \times 2}$ for  $1\le j<k\leq m+1$.

\emph{\textbf{Step 1: Proof of (1)}}. Apparently,  $\Sigma _{22}$ is a Type-\uppercase\expandafter{\romannumeral2} matrix and  $\Sigma _{11}^{ - 1}$ is a scalar complex matrix.  What's more,  we can easily verify that if two $2\times2$ matrices $A \stackrel{u}\leftrightarrow B$ , then for any $t \in \mathbb{C}$, $t  A \stackrel{u}\leftrightarrow t  B$.
     Therefore,  it's sufficient to show that ${\Sigma _{21}}{\Sigma _{12}}$ is a Type-\uppercase\expandafter{\romannumeral2} matrix.  Rewrite
$${\Sigma _{21}}{\Sigma _{12}}
= \left( {\begin{array}{*{20}{c}}
{A_{12}^* + B_{12}^* i}\\
 \vdots \\
{A_{1,m+1}^* + B_{1,m+1}^*  i}
\end{array}} \right)\left( {\begin{array}{*{20}{c}}
{{A_{12}} + {B_{12}}  i},  & \cdots , &{{A_{1,m+1}} + {B_{1,m+1}} i}
\end{array}} \right).$$
 Write $\Gamma={\Sigma _{21}}{\Sigma _{12}}={({\gamma _{jk}})_{j, k = 2,  \cdots , m+1}}$ , where ${\gamma _{jk}}$ is a $2\times2$ complex matrix.

   For $j=2,\cdots, m+1,$ the diagonal $2\times 2$ block entries $\gamma _{jj}$ are
         \begin{eqnarray}
         &({A_{1j}^*} + {B_{1j}^*}  i) \cdot (A_{1j} + B_{1j}  i) \notag
         = ({A_{1j}^*}A_{1j} - {B_{1j}^*}B_{1j}) + ({A_{1j}^*}B_{1j} + {B_{1j}^*}A_{1j})  i \notag \\
         &=\left(|a_{1j}|^2+|b_{1j}|^2-|c_{1j}|^2-|d_{1j}|^2+2i\Re(\bar a_{1j}c_{1j}+b_{1j}\bar d_{1j})\right)I_2.
          \label{eqproduct}
          \end{eqnarray}
 Thus $\gamma _{jj}$ are all Type-T matrices. \\
 {\bf Note:} {\it In the equation (\ref{eqproduct}), if we consider the $2\times 2$ matrices $A_{1j}$ and $B_{1j}$ as quaternions and their algebraic operations as those in quaternions, then on the right hand side of the first line of (\ref{eqproduct}), the expressions  $\|A_{1j}\|^2-\|B_{1j}\|^2$ and $A_{1j}^*B_{1j}+B_{1j}^*A_{1j}
 =2\Re(A_{1j}^*B_{1j})$ are also quaternions. For simplicity of expressions, we shall use either of the double interpretations of the matrices $A_{jk}$ and $B_{jk}$ inaccordance with convenience in the following arguments.}

        Next, for $j\neq k$ we have
        \begin{align}
        ({A_{1j}^*} + {B_{1j}^*} i) \cdot (A_{1k} + B_{1k}  i) = ({A_{1j}^*}A_{1k} - {B_{1j}^*}B_{1k}) + ({A_{1j}^*}B_{1k} + {B_{1j}^*}A_{1k})  i\notag
        \end{align}and
        \begin{align}
        ({A_{1k}^*} + {B_{1k}^*}  i) \cdot (A_{1j} + B_{1j} i)
        &= ({A_{1k}^*}A_{1j} - {B_{1k}^*}B_{1j}) + ({A_{1k}^*}B_{1j} + {B_{1k}^*}A_{1j})  i \notag \\
        &= {({A_{1j}^*}A_{1k} - {B_{1j}^*}B_{1k})^*} + {({A_{1j}^*}B_{1k} + {B_{1j}^*}A_{1k})^*}  i \notag,
        \end{align} which implies,  for any $2\leq j< k\leq m+1$ , ${\gamma _{jk}} \stackrel{u}\leftrightarrow {\gamma _{kj}}$.
      Thus,  the proof of {\textbf (1)} is  complete.

\emph{\textbf{Step 2: Proof of (2)}}. Since ${\Sigma _{11}^{ - 1}}$ is a Type-T matrix,  we only need to prove that ${\Sigma _{12}}\Sigma _{22. 1}^{ - 1}{\Sigma _{21}}$ is also a Type-T matrix.  Write \[\Sigma _{22. 1}^{ - 1} = \left( {\begin{array}{*{20}{c}}
{{P_{11}}}&{{P_{12}}}& \cdots &{{P_{1, m}}}\\
{{P_{21}}}&{{P_{22}}}& \cdots &{{P_{2, m}}}\\
 \vdots & \vdots & \ddots & \vdots \\
{{P_{m, 1}}}&{{P_{m, 2}}}& \cdots &{{P_{m, m}}}
\end{array}} \right) \]
where $P_{jk}$ are $2\times 2$ matrices. Then we have, $${\Sigma _{12}}\Sigma _{22. 1}^{ - 1}{\Sigma _{21}} = \sum\limits_{jk} ({{A_{1,j+1}+B_{1,j+1}i}){P_{jk}}({A_{1,k+1}^*}+B_{1,k+1}^*i}).$$
           From the induction hypothesis,  we know ${{P_{jj}}}$ is a Type-T matrix and ${{P_{jk}}}\stackrel{d}\leftrightarrow{{P_{kj}}}$.

 If $j = k$,  since ${P _{jj}}$ is a Type-T matrix and by the note above 
             \begin{align}
         &({A_{1,j+1}} + {B_{1,j+1}}  i) \cdot (A_{1,j+1}^* + B_{1,j+1}^* i) \notag \\
         &= ({A_{1,j+1}}A_{1,j+1}^* - {B_{1,j+1}}B_{1,j+1}^*) + ({A_{1,j+1}}B_{1,j+1}^* + {B_{1,j+1}}A_{1,j+1}^*)  i \notag \\
         &=\big((\|A_{1,j+1}\|^2-\|B_{1,j+1}\|^2)+2\Re({A_{1,j+1}}B_{1,j+1}^*)i\big)I_2\notag,
          \end{align}  we get that $({A_{1,j+1}+B_{1,j+1}i}){P_{jj}}({A_{1,j+1}^*}+B_{1,j+1}^*i)$  is a Type-T matrix.

If $j \neq k$, then we have
\begin{align}
&({A_{1,j+1}} + {B_{1,j+1}}  i){P_{jk}}(A_{1,k+1}^* + B_{1,k+1}^*  i) + ({A_{1,k+1}} + {B_{1,k+1}}  i){P_{kj}}(A_{1,j+1}^* + B_{1,j+1}^*  i)\notag \\
 = &({A_{1,j+1}}{P_{jk}}A_{1,k+1}^* + {A_{1,k+1}}{P_{kj}}A_{1,j+1}^*) - ({B_{1,j+1}}{P_{jk}}B_{1,k+1}^* + {B_{1,k+1}}{P_{kj}}B_{1,j+1}^*) \notag \\&+ ({A_{1,j+1}}{P_{jk}}B_{1,k+1}^* + {B_{1,k+1}}{P_{kj}}A_{1,j+1}^*)  i + ({B_{1,j+1}}{P_{jk}}A_{1,k+1}^* + {A_{1,k+1}}{P_{kj}}B_{1,j+1}^*)  i\notag.
\end{align}
Since ${{P_{jk}}}\stackrel{d}\leftrightarrow{{P_{kj}}}$, thus we can assume
\begin{displaymath}
P_{jk}=
\left(\begin{array}{cc}
e_{jk} & g_{jk}\\
h_{jk} & f_{jk}
\end{array}\right),\quad
P_{kj}=
\left(\begin{array}{cc}
f_{jk} & -g_{jk}\\
-h_{jk} & e_{jk}
\end{array}\right).
\end{displaymath}
It is easy to verify that
\begin{align}
&({A_{1,j+1}}{P_{jk}}A_{1,k+1}^* + {A_{1,k+1}}{P_{kj}}A_{1,j+1}^*)\notag \\
=&\left(\begin{array}{cc}
a_{1,j+1} & b_{1,j+1}\\
-\overline{b}_{1,j+1} & \overline{a}_{1,j+1}
\end{array}\right)
\left(\begin{array}{cc}
e_{jk} & g_{jk}\\
h_{jk} & f_{jk}
\end{array}\right)
\left(\begin{array}{cc}
\overline{a}_{1,k+1} & -b_{1,k+1}\\
\overline{b}_{1,k+1} & a_{1,k+1}
\end{array}\right)\notag \\
+&\left(\begin{array}{cc}
a_{1,k+1} & b_{1,k+1}\\
-\overline{b}_{1,k+1} & \overline{a}_{1,k+1}
\end{array}\right)
\left(\begin{array}{cc}
f_{jk} & -g_{jk}\\
-h_{jk} & e_{jk}
\end{array}\right)
\left(\begin{array}{cc}
\overline{a}_{1,j+1} & -b_{1,j+1}\\
\overline{b}_{1,j+1} & a_{1,j+1}
\end{array}\right)\notag \\
=&\left(\begin{array}{cc}
2x & 0\\
0 & 2x
\end{array}\right),\notag
\end{align}
where \begin{align*}
x=(\overline{a}_{1,j+1}a_{1,k+1}+b_{1,j+1}\overline{b}_{1,k+1})f_{jk}+(a_{1,j+1}\overline{a}_{1,k+1}+\overline{b}_{1,j+1}b_{1,k+1})e_{jk}\\
+(a_{1,j+1}\overline{b}_{1,k+1}-\overline{b}_{1,j+1}a_{1,k+1})g_{jk}+(b_{1,j+1}\overline{a}_{1,k+1}-\overline{a}_{1,j+1}b_{1,k+1})h_{jk}.
\end{align*}
Hence, we conclude that $({A_{1,j+1}}{P_{jk}}A_{1,k+1}^* + {A_{1,k+1}}{P_{kj}}A_{1,j+1}^*)$ is a Type-T matrix. Similarly, we can verify that $({B_{1,j+1}}{P_{jk}}B_{1,k+1}^* + {B_{1,k+1}}{P_{kj}}B_{1,j+1}^*)$, $({A_{1,j+1}}{P_{jk}}B_{1,k+1}^*+{B_{1,k+1}}{P_{kj}}A_{1,j+1}^*)$,  and $({B_{1,j+1}}{P_{jk}}A_{1,k+1}^* + {A_{1,k+1}}{P_{kj}}B_{1,j+1}^*)$ are all Type-T matrices, which complete the proof.

\emph{\textbf{Step 3: Proof of (3)}}. Since  $\Sigma _{11}^{ - 1}$ is a diagonal matrix,  thus we only need to prove that ${\Sigma _{12}}\Sigma _{22. 1}^{ - 1}\stackrel{d}\leftrightarrow\Sigma _{22. 1}^{ - 1}{\Sigma _{21}}$ . Write ${Q} = {\Sigma _{12}}\Sigma _{22. 1}^{ - 1} = \begin{pmatrix}Q_1&{ Q_2}&\cdots  &Q_{m}
\cr \end{pmatrix}$,
$V = \Sigma _{22. 1}^{ - 1}{\Sigma _{21}} = \begin{pmatrix}
{{V'_1}}&{ {V'_2}}&{  \cdots  }&{{V'_{m}}}
\cr \end{pmatrix}'$ and
\[\Sigma _{22. 1}^{ - 1} = \left( {\begin{array}{*{20}{c}}
{{P_{11}}}&{{P_{12}}}& \cdots &{{P_{1,m}}}\\
{{P_{21}}}&{{P_{22}}}& \cdots &{{P_{2, m}}}\\
 \vdots & \vdots & \ddots & \vdots \\
{{P_{m, 1}}}&{{P_{m, 2}}}& \cdots &{{P_{m, m}}}
\end{array}} \right), \]
Then for any $k$, we  have  \begin{align} \label{align:1}
Q_k = \sum_{j}^{} (A_{1,j+1}+B_{1,j+1}i) P_{jk}, V_k = \sum_{j}^{} {{P_{kj}}(A_{1,j+1}^*+B_{1,j+1}^*i)}.
\end{align}
To complete the proof,  it is sufficient to show that for any $k$,  $Q_k\stackrel{u}\leftrightarrow V_k$ .
From the induction hypothesis,  we  assume that ${P_{jk}} = \left( {\begin{array}{*{20}{c}}
{{e_{jk}}}&{{g_{jk}}}\\
{{h_{jk}}}&{{f_{jk}}}
\end{array}} \right)$,
${P_{kj}} = \left( {\begin{array}{*{20}{c}}
{{f_{jk}}}&{ - {g_{jk}}}\\
{ - {h_{jk}}}&{{e_{jk}}}
\end{array}} \right)$.\\
Then we have
\begin{align}\label{align:2}
&(A_{1,j+1} + B_{1,j+1}  i) \cdot \left( {\begin{array}{*{20}{c}}
{{e_{jk}}}&{{g_{jk}}}\\
{{h_{jk}}}&{{f_{jk}}}
\end{array}} \right)\notag \\
= &A_{1,j+1}  \left( {\begin{array}{*{20}{c}}
{{e_{jk}}}&{{g_{jk}}}\\
{{h_{jk}}}&{{f_{jk}}}
\end{array}} \right) + B_{1,j+1}  \left( {\begin{array}{*{20}{c}}
{{e_{jk}}}&{{g_{jk}}}\\
{{h_{jk}}}&{{f_{jk}}}
\end{array}} \right)  i\notag \\
 = &\left( {\begin{array}{*{20}{c}}
{{e_{jk}}{a_{1,j+1}} + {h_{jk}}{b_{1,j+1}}}&{{g_{jk}}{a_{1,j+1}} + {f_{jk}}{b_{1,j+1}}}\\
{ - {e_{jk}}{{\overline b}_{1,j+1}} + {h_{jk}}{{\overline a}_{1,j+1}}}&{ - {g_{jk}}{{\overline b}_{1,j+1}} + {f_{jk}}{{\overline a}_{1,j+1}}}
\end{array}} \right)\notag\\ +& \left( {\begin{array}{*{20}{c}}
{{e_{jk}}{c_{1,j+1}} + {h_{jk}}{d_{1,j+1}}}&{{g_{jk}}{c_{1,j+1}} + {f_{jk}}{d_{1,j+1}}}\\
{ - {e_{jk}}{{\overline d}_{1,j+1}} + {h_{jk}}{{\overline b}_{1,j+1}}}&{ - {g_{jk}}{{\overline d}_{1,j+1}} + {f_{jk}}{{\overline c}_{1,j+1}}}
\end{array}} \right)  i\notag
\end{align}
\\and
\begin{align}
 &\left( {\begin{array}{*{20}{c}}
{{f_{jk}}}&{ - {g_{jk}}}\\
{ - {h_{jk}}}&{{e_{jk}}}
\end{array}} \right)  ({A_{1,j+1}^*} + {B_{1,j+1}^*}  i)\notag \\
 = &\left( {\begin{array}{*{20}{c}}
{{f_{jk}}}&{ - {g_{jk}}}\\
{ - {h_{jk}}}&{{e_{jk}}}
\end{array}} \right)  {A_{1,j+1}^*} + \left( {\begin{array}{*{20}{c}}
{{f_{jk}}}&{ - {g_{jk}}}\\
{ - {h_{jk}}}&{{e_{jk}}}
\end{array}} \right)  {B_{1,j+1}^*}  i\notag \\
 = &\left( {\begin{array}{*{20}{c}}
{ - {g_{jk}}{{\overline b}_{1,j+1}} + {f_{jk}}{{\overline a}_{1,j+1}}}&{ - {g_{jk}}{a_{1,j+1}} - {f_{jk}}{b_{1,j+1}}}\\
{{e_{jk}}{{\overline b}_{1,j+1}} - {h_{jk}}{{\overline a}_{1,j+1}}}&{{e_{jk}}{a_{1,j+1}} + {h_{jk}}{b_{1,j+1}}}
\end{array}} \right) \notag \\+& \left( {\begin{array}{*{20}{c}}
{ - {g_{jk}}{{\overline d}_{1,j+1}} + {f_{jk}}{{\overline c}_{1,j+1}}}&{ - {g_{jk}}{c_{1,j+1}} - {f_{jk}}{d_{1,j+1}}}\\
{{e_{jk}}{{\overline d}_{1,j+1}} - {h_{jk}}{{\overline b}_{1,j+1}}}&{{e_{jk}}{c_{1,j+1}} + {h_{jk}}{d_{1,j+1}}}
\end{array}} \right)  i,\notag
\end{align}
which together with (\ref{align:1}) complete the proof of (3). Therefore, we get Lemma \ref{lemma:1} with $t_1\neq0$.

Since all the entries of $\Omega_m^{-1}$ are in fact a polynomial of $t_1$, thus by the continuity of polynomial, we have the conclusion of
Lemma \ref{lemma:1} is  true even $t_1=0$. Then we complete the proof.
\end{proof}
\section{Proof of Theorem \ref{th1}}

In this section we give the proof of Theorem \ref{th1}. The tools we use here are  Stieltjes transform and  Burkholder inequality for the martingale difference sequence. The proof is following the same steps as Section 2 in \cite{BaiS10S}.

\begin{remark}
In Wigner's paper, the semicircular law of random matrices with real entries was proved by the moment method. And one can find that Wigner's approach can be applied to the complex Hermitian ensembles. But when the entries of random matrices are quaternions, the moment method will run into a stone wall due to the breaking of the commutative law of multiplication in the quaternion field. What's more, the moment method cannot give any convergence rate. Thus in this paper we will use the method of Stieltjes transform to prove our main theorem. And we believe that our Lemma \ref{lemma:1} will play an important role in further studies of the common quaternion self-dual Hermitian matrices.
\end{remark}

\subsection{ Truncation, centralization and rescale}

   Define $${\widetilde W}_n =n^{-1/2}(\tilde x_{ij})_{j, k=1}^n =\frac{1}{{\sqrt n }}(x_{jk}I(\| {x_{jk}} \| \le {\eta _n}\sqrt n ))_{j, k=1}^n.$$ Then
   by Lemma \ref{lemma:3}, we obtain that
\begin{equation}\label{eq:3}
  \left\| {{F^{{W_n}}} - {F^{\widetilde W_n}}} \right\| \le \frac{1}{{2n}}{\rm {rank}}\left( {{W_n} - {{\widetilde W}_n}} \right) \leq \frac{1}{{n}}\sum\limits_{1\leq j \leq k \leq n } {I(\| {x_{jk}} \| \ge {\eta _n}\sqrt n )}.
\end{equation}
 By condition (\ref{eq:2}),  we have $${\rm E}\left(\frac{1}{n}\sum\limits_{1 \le j \le k \le n} {I(\| {x_{jk}} \| \ge {\eta _n}\sqrt n )}\right ) \le \frac{1}{{\eta _n^2{n^2}}}{\sum\limits_{jk} {{\rm E}\| {x_{jk}} \|} ^2}I(\| {x_{jk}} \| \ge {\eta _n}\sqrt n ) = o(1), $$
 and $${\rm {Var}}\left(\frac{1}{n}\sum\limits_{1 \le j \le k \le n} {I(\| {x_{jk}} \| \ge {\eta _n}\sqrt n )}\right ) \le \frac{1}{{\eta _n^2{n^3}}}{\sum\limits_{jk} {{\rm E}\| {x_{jk}} \|} ^2}I(\| {x_{jk}} \| \ge {\eta _n}\sqrt n ) = o(\frac{1}{n}). $$
 Then by Bernstein's inequality,  for all small $\varepsilon > 0 $ and large $n$,  we have
 \begin{equation}\label{eq:4}
   {\rm P}\left(\frac{1}{n}\sum\limits_{1 \le j \le k \le n} {I(\| {x_{jk}} \| \ge {\eta _n}\sqrt n )}  \ge \varepsilon \right) \le 2{e^{ - \varepsilon n}},
 \end{equation}
 which is summable.  Thus combining (\ref{eq:3}), (\ref{eq:4}) and Borel-Cantelli Lemma, we obtain
  \begin{equation}
  \left\| {{F^{{W_n}}} - {F^{\widetilde W_n}}} \right\| \to 0, \quad a. s.
\end{equation}

Next we will
remove the diagonal elements.  Let $\widehat W_n$ be the matrix obtained from $\widetilde W_n$ by replacing the diagonal elements with 0.  Then using Lemma \ref{lemma:4},  we have:
\begin{gather*}
{L^3}\left( {{F^{{{\widehat W}_n}}}, {F^{{{\widetilde W}_n}}}} \right) \le \frac{1}{{2n}}{\rm{tr}} {{{\left( {{{\widetilde W}_n} - {{\widehat W}_n}} \right)}}}{{{\left( {{{\widetilde W}_n} - {{\widehat W}_n}} \right)}^*}} \\ \le \frac{1}{{{n^2}}}\sum\limits_{j = 1}^n \| {{x_{jj}}} \|^2 I\left( {\| {{x_{jj}}} \| < {\eta _n}\sqrt n} \right) \le \eta _n^2 \to 0,
\end{gather*}
where $L(\cdot, \cdot)$ is the Levy distance between two distributions (See Remark A.39. in \cite{BaiS10S}).

In addition, by Lemma \ref{lemma:4} and Remark \ref{rem15},  we have:
\begin{align*}
&{L^3}({F^{{\widehat W_n}}},  {F^{{\widehat W_n} - {\rm E}{\widehat W_n}}})
 \le \frac{1}{{{n^2}}}{\sum\limits_{j \ne k} {\left\| {{\rm E}({x_{jk}}I(\| {{x_{jk}}}\| \le {{\eta _n}\sqrt n })}) \right\|} ^2}\\
 \le& \frac{1}{{{\eta_n^2n^2}}}\sum\limits_{jk} {{\rm E}{{\| {{x_{jk}}} \|}^2}I(\| {{x_{jk}}} \| \ge {{\eta _n}\sqrt n })}  \to 0.
\end{align*}

Now the remaining work
  is rescaling. Let  $\sigma _{jk}^2 = {\rm{Var(}}\tilde x_{jk} )$. If $j\le k$ and $\si_{jk}^2<1/2$, then we replace $\tilde x_{jk} - {\rm E} \tilde x_{jk}$ by a bounded real random variable ${\breve{x}_{jk}}$ with mean 0, variance 1, say the one taking $\pm 1$ with probability $1/2$, and  being  independent of the other entries. If $j\le k$ and $\si_{jk}^2\geq1/2$, we  denote   ${\breve{x}_{jk}}=\tilde x_{jk} - {\rm E} \tilde x_{jk}$. And if $j\geq k$, we  denote   ${\breve{x}_{jk}}={\bar{\breve{x}}_{kj}}$. Let $E_n$ be the set of pairs $\{(j, k)$: $\sigma _{jk}^2 < \frac{1}{2}\}$ and $N_n$ be the cardinal number of $E_n$.  Because $\frac{1}{n^2}\sum \limits_{jk} \sigma _{jk}^2 \to 1 $, combining with the fact that $\sigma_{ij}^2 \leq 1$ for any $j,k$, we can conclude that $N_n=o(n^2)$.  Write ${\breve{q}_{jk}}={\breve{x}_{jk}}-\tilde x_{jk} + {\rm E} \tilde x_{jk}$ and write $\breve{W}_n= \frac{1}{\sqrt n}({\breve{x}_{jk}})$.  By Lemma \ref{lemma:4},  we get that
  \begin{equation}\label{eq:9}
{L^3}(F^{\breve W_n} ,  F^{{\widehat W_n} - {\rm E}{\widehat W_n}})
 \le {\frac{1}{{{n^2}}}\sum\limits_{(j,k)\in E_n} \|\breve{q}_{jk}\|^2}.
\end{equation}

Rewrite the right hand side above as $${\frac{1}{{{n^2}}}\sum\limits_{(j,k)\in E_n} \|\breve{q}_{jk}\|^2}:=\frac{2}{n^2}\sum\limits_{k=1}^K u_k,$$  where $K=\frac {1}{2}N_n$.  Then,  select $m=[\log n]$ and for any fixed $t, \varepsilon >0$,  we have:
\begin{equation}
\begin{split}
{\rm P}\left(\frac2{n^2}\sum_{k=1}^Ku_k\ge \varepsilon\right)&\le {\rm E}\left(\frac{2}{\varepsilon n^2}\sum\limits_{k=1}^K u_k\right)^m \\
&=\frac{2^m}{\varepsilon^mn^{2m}}\sum\limits_{m_1+\ldots+m_k=m}\frac{m!}{{m_1}!\ldots{m_k}!}{\rm E}u_1^{m_1}\ldots {\rm E}u_K^{m_k} \\
&\leq \frac{2^m}{\varepsilon^mn^{2m}}\sum\limits_{l=1}^m \sum_{\substack{m_1+\ldots +m_l=m \\ m_t\geq 1}}\frac{m!}{l!m_1! \ldots m_l!}\prod_{t=1}^l \left(\sum_{k=1}^K {\rm E}u_k^{m_t}\right) \\
&\leq C\varepsilon^{-m} \sum_{l=1}^m 2^m n^{-2m}l^m(l!)^{-1}(2\eta_n^2 n)^{m-l}2^l K^l \\
&\leq C\varepsilon^{-m}\sum_{l=1}^m\left(\frac{12K}{n^2}\right)^l\left(\frac{4\eta_n^2 l}{n}\right)^{m-l} \\
&\leq C\left(\frac{12K}{n^2\varepsilon}+\frac{4\eta_n^2 m}{n\varepsilon}\right)^m = o(n^{-t}),\notag
\end{split}
\end{equation}
where we have used the fact that for all $l\geq 1$,  $l!\geq(l/3)^l$ and the last inequality follows from facts that the two terms in the parentheses tend to 0 and $m=[ \log n]$.  From the inequality above with $t=2$ and (\ref{eq:9}) we conclude that :
$$L(F^{\breve W_n} ,  F^{{\widehat W_n} - {\rm E}{\widehat W_n}}) \to 0,  a. s.. $$
Write$$\widetilde{\widetilde{W}}_n=\frac{1}{\sqrt n} \Big(\breve \sigma_{jk}^{-1}\breve x_{jk}(1-\delta_{jk})\Big),$$
where $\breve \sigma_{jk}^2={\rm E}\|\breve x_{jk}\|^2$ and $\delta_{jk}$ is the Kronecker delta,  i.e.  equal to 1 when $j=k$ and 0 otherwise.
By Lemma \ref{lemma:4},  it follows that:
 \begin{equation}
{L^3}(F^{\widetilde{\widetilde{W}}_n} ,  F^{\breve W_n})
\le {\frac{1}{{{n^2}}}\sum_{i \neq j} (1-\breve \sigma_{ij}^{-1})^2\|{\breve x}_{ij}\|^2}
+\frac1{n^2}\sum_{j=1}^n\|\breve x_{jj}\|^2.\nonumber
\end{equation}
Note that
 \begin{equation}
 \begin{split}
&{\rm E}\Big ( {\frac{1}{{{n^2}}}\sum_{j \neq k} (1-\breve \sigma_{jk}^{-1})^2\|{\breve x}_{jk}\|^2}+\frac1{n^2}\sum_{j=1}^n\|\breve x_{jj}\|^2 \Big )\\
=& \frac{1}{{{n^2}}}\sum_{j\ne k} (1-\breve \sigma_{jk})^2+\frac{M}{n}
\leq \frac{1}{{{n^2}}}\sum_{j\ne k} (1-\breve \sigma_{jk}^2) +\frac{M}{n}\\
\leq & \frac{1}{{{n^2}}}\sum_{(j, k) \not \in E_n } \left[{\rm E}\|x_{jk}\|^2 I(\|x_{jk}\|\geq \eta_n \sqrt n)+{\rm E}^2\|x_{jk}\|I(\|x_{jk}\|\geq \eta_n \sqrt n)\right]+\frac{M}{n}\\
 \to& 0.\notag
\end{split}
\end{equation}
Also,  applying Lemma \ref{lemma:5},  we have:
 \begin{equation}
 \begin{split}
&{\rm E }  \Bigg | \frac{1}{n^2}\sum_{j\neq k} (1-\breve \sigma_{jk}^{-1})^2 \Big ( \|{\breve x}_{jk}\|^2 - {\rm E}({\|\breve x}_{jk}\|^2) \Big ) \Bigg |^4 \\
\leq & \frac{C}{n^8}\Bigg [\sum_{j \neq k}{\rm E}\|x_{jk}\|^8 I(\|x_{jk}\|\leq \eta_n \sqrt n)+\Big ( \sum_{j\neq k} {\rm E}\|x_{jk}\|^4 I(\|x_{jk}\|\leq \eta_n \sqrt n)\Big )^2 \Bigg ] \\
\leq & Cn^{-2}[n^{-1}\eta_n^6+\eta_n^4] \notag
\end{split}
\end{equation}
which is summable.  Similarly, we have 
 \begin{equation}
 \begin{split}
&{\rm E }  \Bigg | \frac{1}{n^2}\sum_{j=1}^n  \Big ( \|{\breve x}_{jj}\|^2 - {\rm E}({\|\breve x}_{jj}\|^2) \Big ) \Bigg |^2 \\
\leq & \frac{C}{n^4}\Bigg [\sum_{j =1}^n{\rm E}\|x_{jk}\|^4 I(\|x_{jj}\|\leq \eta_n \sqrt n) \Bigg ] \\
\leq & Cn^{-2}\eta_n^2 \notag
\end{split}
\end{equation}
which is also summable.
From the three estimates above,  we conclude that
$$L(F^{\widetilde{\widetilde{W}}_n} ,  F^{\breve W_n}) \to 0,  a. s.. $$
Therefore,  we conclude that:
$${L}(F^{{ W_n}} , F^{\widetilde{\widetilde{W}}_n})\rightarrow 0,  \quad a.s. . $$

Thus in the proof of the theorem, we may assume that:
\begin{enumerate}
                     \item The variables ${x_{jk}, 1\le j<k \le n}$ are independent and ${x_{jj}} = \left(
                                        \begin{array}{cc}
                                          0 & 0 \\
                                          0 & 0 \\
                                        \end{array}
                                      \right).\\$
                     \item ${\rm E}x_{jk}=0, {\rm Var}(x_{jk})=1, 1 \le j < k \le n.$
                     \item $\left\| {{x_{jk}}} \right\|\leq \eta_n \sqrt n, 1 \le j < k \le n$.
                   \end{enumerate}
For brevity, we still use $x_{ij}$ to denote the truncated and normalized variables  in the sequel.

\subsection{Proof of Theorem \ref{th1}}
The main mathematical tool of the proof of Theorem \ref{th1} is
 Stieltjes transform, which is defined as:
For any function of bounded variation $G$ on the real line, its Stieltjes transform is defined by
$$s_G(z)=\int\frac{1}{y-z}dG(y),~~z\in\mathbb{C}^{+}\equiv\{z\in\mathbb{C}:\Im z>0\}.$$
From Theorems B.8-B.10 in \cite{BaiS10S},   we conclude that we just need to  proceed in our proof by the following three steps:
\begin{description}
  \item[1 ] For any fixed $z \in {\mathbb{C}^ + }$,  ${s_n}(z) - {\rm E}{s_n}(z) \to 0,  \quad a. s..  $
  \item[2 ] For any fixed $z \in {\mathbb{C}^ + }$,  ${\rm E}{s_n}(z) \to s(z) $.
  \item[3 ] Outside a null set,  ${s_n}(z) \to s(z)$ for every $z \in {\mathbb{C}^ + }$.
\end{description}
Here $z=u+\upsilon i$ with $\upsilon>0$, $s_n(z)=s_{F^{W_n}}(z)$ and $s(z)$ is
  the Stieltjes transform of the semicircular law (see Lemma \ref{lemma:6}). Similar to the Step 3 in  Section 2.3 of \cite{BaiS10S}, the last step is implied by the first two steps and thus
its proof is omitted. We now proceed with the first two steps.

{\bf{ Step 1}}:
Let ${\rm E_k}( \cdot )$ denote the conditional expectation given $\left\{ {{x_{jl}}, j, l > k} \right\}$,  then we have
\label{eq:7}
  \begin{align}
{s_n}(z) - {\rm E}{s_n}(z) &= \frac{1}{{2n}}\sum\limits_{k = 1}^n[{{{\rm E}_{k - 1}}{\rm tr}{{({W_n} - z{I_{2n}})}^{ - 1}} - } {\rm E_k}{\rm tr}{({W_n} - z{I_{2n}})^{ - 1}}]\no\\&= \frac{1}{{2n}}\sum\limits_{k = 1}^n {{\gamma _k}} ,
 \end{align}
where
  \begin{equation}\label{eq:8}
  \begin{split}
{\gamma _k} =& {{\rm E}_{k - 1}}{\rm tr}{({W_n} - z{I_{2n}})^{ - 1}} - {{\rm E}_k}{\rm tr}{({W_n} - z{I_{2n}})^{ - 1}}\\
 =& {{\rm E}_{k - 1}}[{\rm tr}{({W_n} - z{I_{2n}})^{ - 1}} - {\rm tr}{(W_n^{(k)} - z{I_{2n - 2}})^{ - 1}}]\\
 & - {{\rm E}_k}[{\rm tr}{({W_n} - z{I_{2n}})^{ - 1}} - {\rm tr}{(W_n^{(k)} - z{I_{2n - 2}})^{ - 1}}]
 \end{split}
\end{equation}
and $W_n^{(k)}$ is the matrix obtained from ${W_n}$ with the $k$-th quaternion row and quaternion column removed. Notice that here we use the fact that
\begin{align*}
    s_n(z)=\int\frac{1}{\lambda-z}dF^{W_n}(\lambda)=\frac{1}{2n}{\rm tr}(W_n-zI_{2n})^{-1}.
\end{align*}
By Lemma \ref{lemma:7},  we have:
\begin{equation}\label{eq:5}
\left| {{\rm tr}{{({W_n} - z{I_{2n}})}^{ - 1}} - {\rm tr}{{(W_n^{(k)} - z{I_{2n - 2}})}^{ - 1}}} \right| \le \frac{2}{\upsilon}.
\end{equation}
Note that here we use the fact that ${W_n^{(k)}}$ has two  rows and two columns fewer than $W_n$.
 Check that $\{ {\gamma _k}\} $  forms a sequence of bounded martingale differences, thus by Lemma \ref{lemma:8},  we obtain
$${\rm E}{\left| {{s_n}(z) -{\rm E}{s_n}(z)} \right|^4} \le \frac{{{K_4}}}{{{{(2n)}^4}}}{\rm E}
{\left( {\sum\limits_{k = 1}^n {{{\left| {{\gamma _k}} \right|}^2}} } \right)^2} \le \frac{{16{K_4}}}{{{n^2}{\upsilon^4}}}
= O({n^{ - 2}}). $$
which  together with Borel-Cantelli Lemma  implies that, for each fixed $ z\in \mathbb{C}^+ $,
$${s_n}(z) -{\rm E}{s_n}(z) \to 0 \quad a. s. $$

{\bf{ Step 2}}:
Denote  $Q_k=(x'_{1k}, \ldots, x'_{(k-1)k}, x'_{(k+1)k}, \ldots, x'_{nk})'$,
$$ R_k(j, l)=
\left(\begin{array}{cc}
e_{jl}(k) & g_{jl}(k)\\
h_{jl}(k) & f_{jl}(k)
\end{array}\right),
R_k=(W_n^{(k)}-zI_{2n-2})^{-1}=(R_k(j, l))_{j, l\ne k},
$$

$$R(j, l)=
\left(\begin{array}{cc}
e_{jl} & g_{jl}\\
h_{jl} & f_{jl}\end{array}\right)\mbox{ and }
R=(W_n-zI_{2n})^{-1}=(R(j, l)).$$
By Lemma \ref{lemma:9},  we have
$${s_n}(z) = \frac{1}{{2n}}{\rm tr}{({W_n} - z{I_{2n}})^{ - 1}}
= \frac{1}{{2n}}\sum\limits_{k = 1}^n {{\rm tr}( - z{I_2} - {Q^*}_k{{(W_n^{(k)} - z{I_{2n - 2}})}^{ - 1}}{Q_k}} {)^{ - 1}}.$$
Let $\varepsilon_k={\rm E}s_n(z)I_2-\frac{1}{n}Q_k^*R_kQ_k, $ then  we have $${\rm E}{s_n}(z) =  - \frac{1}{{z + {\rm E}{s_n}(z)}} + {\delta _n}$$
where $${\delta _n} = \frac{1}{{2n}}\sum\limits_{k = 1}^n {{\rm {Etr}}(\frac{1}{{z + {\rm E}{s_n}(z)}}} {\varepsilon _k}{(( - z - {\rm E}{s_n}(z)){I_2} + {\varepsilon _k})^{ - 1}}).$$
Solving for ${\rm E}{s_n}(z)$ from the equation above and according to the analysis in Page 36 of \cite{BaiS10S}, it is suffices to show that $${\delta _n} \to 0.$$

Now, rewrite
\begin{align}\label{eq7}
\delta_n&=-\frac{1}{2n(z+{\rm E}s_n(z))^2}\sum_{k=1}^{n}{\rm{Etr}}\varepsilon_k \notag \\
+&\frac{1}{2n(z+{\rm E}s_n(z))^2}\sum_{k=1}^{n}{\rm{Etr}}(\varepsilon_k^2(\varepsilon_k-zI_2-{\rm E}s_n(z)I_2)^{-1})\notag \\
&=J_1+J_2.
\end{align}

By (\ref{eq:5}), we have
\begin{align}\label{eq8}
\vert {\rm{Etr}}\varepsilon_k\vert & =\vert {\rm{tr}}({\rm E}s_n(z)I_2-\frac{1}{n}{\rm E}Q_k^*(W_n^{(k)}-zI_{2n-2})^{-1}Q_k)\vert \notag \\
& =\vert 2{\rm E}s_n(z)-\frac{1}{n}{\rm{Etr}}(W_n^{(k)}-zI_{2n-2})^{-1}\vert \notag\\
& =\frac{1}{n}\vert {\rm{Etr}}(W_n-zI_{2n})^{-1}-{\rm{Etr}}(W_n^{(k)}-zI_{2n-2})^{-1} \vert\leq \frac{2}{n\upsilon}.
\end{align}

Then, note that the Stieltjes transform sends $\mathbb{C}^+$ to $\mathbb{C}^+$, we obtain
\begin{equation}\label{eq9}
\vert z+s_n(z)\vert\geq \Im(z+s_n(z))=\upsilon+\Im(s_n(z))\geq \upsilon,
\end{equation}

By (\ref{eq8}) and (\ref{eq9}), we obtain, for any fixed $z\in\mathbb{C}^+$,
\begin{equation}\label{eq12}
\vert J_1\vert \leq \frac{1}{n\upsilon^3} \to 0.
\end{equation}

 Now we begin to prove $J_2\to 0$. Let $\alpha_k$ denote the first column of $Q_k$, and $\beta_k$ denote the second column of $Q_k$.
Write:
\begin{align}
(\varepsilon_k-zI_2-{\rm E}s_n(z)I_2)=
\left(\begin{array}{cc}
-z-\frac{1}{n}\alpha_k^*R_k\alpha_k & -\frac{1}{n}\alpha_k^*R_k\beta_k\\
-\frac{1}{n}\beta_k^*R_k\alpha_k & -z-\frac{1}{n}\beta_k^*R_k\beta_k
\end{array}\right) .
\end{align}
By Lemma \ref{lemma:1}, we obtain
\begin{align}\label{eq1}
&(\varepsilon_k-zI_2-{\rm E}s_n(z)I_2)^{-1}\notag\\
&=\left(\begin{array}{cc}
-z-\frac{1}{n}\alpha_k^*R_k\alpha_k & 0\\
0 & -z-\frac{1}{n}\beta_k^*R_k\beta_k
\end{array}\right)^{-1}
\end{align}
and
\begin{equation}\label{eq:6}
-z-\frac{1}{n}\alpha_k^*R_k\alpha_k=-z-\frac{1}{n}\beta_k^*R_k\beta_k.
\end{equation}
Using \eqref{eq1} and  (\ref{eq:6}), we have
\begin{align}\label{eq11}
{\rm E}\vert {\rm tr}\varepsilon_k^2\vert\notag &={\rm E}\vert [{\rm E}s_n(z)-\frac{1}{n}\alpha_k^*R_k\alpha_k]^2+[{\rm E}s_n(z)-\frac{1}{n}\beta_k^*R_k\beta_k]^2\vert \notag \\
&=2{\rm E}\vert {\rm E}s_n(z)-\frac{1}{n}\alpha_k^*R_k\alpha_k\vert^2 \notag \\
&=2{\rm E}\vert {\rm E}s_n(z)-\frac{1}{2n}(\alpha_k^*R_k\alpha_k+\beta_k^*R_k\beta_k)\vert^2 \notag \\
&=2{\rm E}\vert {\rm E}s_n(z)-\frac{1}{2n}{\rm tr}Q_k^*R_kQ_k\vert^2 \notag \\
&\leq8{\rm E}\{\vert\frac{1}{2n}[{\rm{tr}}Q_k^*R_kQ_k-{\rm{tr}}R_k]\vert^2+\vert\frac{1}{2n}[{\rm{tr}}R-{\rm{tr}}R_k]\vert^2+\vert s_n(z)-{\rm E}s_n(z)\vert^2\}.
\end{align}
What is more, from (\ref{eq:5}), we have
\begin{align}\label{eq3}
{\rm E}\vert\frac{1}{2n}&[{\rm{tr}}R-{\rm{tr}}R_k]\vert^2\leq\frac{1}{n^2\upsilon^2}.
\end{align}
By (\ref{eq:7}), (\ref{eq:8}), (\ref{eq:5}) and applying the  fact  that the martingale difference $\gamma_k$ are uncorrelated, for $k=1, \cdots, n$, we obtain
\begin{align}\label{eq4}
{\rm E}\vert s_n(z)-{\rm E}s_n(z)\vert^2&=\frac{1}{4n^2}\sum_{k=1}^{n}{\rm E}|\gamma_k|^2 \leq\frac{4}{n\upsilon^2}.
\end{align}
Now  considering the first term of \eqref{eq11}, we have
\begin{align}
&{\rm E}\vert {\rm tr}Q_k^*R_kQ_k-{\rm tr} R_k\vert^2\notag \\
&={\rm E}\vert \sum_{j\neq k}{}\sum_{l \neq k}{}{\rm tr}x_{lk}x_{jk}^*R_k(j, l)-\sum_{j \neq k}{}{\rm tr}R_k(j, j)\vert^2\notag \\
&={\rm E}\vert \sum_{j\neq k}{}\sum_{j \neq l, l \neq k}{}{\rm tr}x_{lk}x_{jk}^*R_k(j, l)+\sum_{j \neq k}{}{\rm tr}(x_{jk}x_{jk}^*-I_2)R_k(j, j)\vert^2\notag \\
&\leq 2 \sum_{j\neq k}{}\sum_{j \neq l, l \neq k}{} {\rm E}\vert {\rm tr}x_{lk}x_{jk}^*R_k(j, l)\vert^2+\sum_{j \neq k}{} {\rm E}\vert{\rm tr}(x_{jk}x_{jk}^*-I_2)R_k(j, j)\vert^2.\notag
\end{align}
Since for $l\ne j$,
\begin{align}
{\rm E}\vert {\rm tr} x_{lk}x_{jk}^*R_k(j, l)\vert^2&={\rm E}\vert \rm{tr}
\left(\begin{array}{cc}
\lambda_{lk} & \omega_{lk}\\
-\overline{\omega}_{lk} & \overline{\lambda}_{lk}
\end{array}\right)
\left(\begin{array}{cc}
\overline{\lambda}_{jk} & -\omega_{jk}\\
\overline{\omega}_{jk} &\lambda_{jk}
\end{array}\right)
\left(\begin{array}{cc}
e_{jl}(k) & g_{jl}(k)\\
h_{jl}(k) & f_{jl}(k)
\end{array}\right)\vert^2\notag \\
&={\rm E}\vert(\lambda_{lk}\overline{\lambda}_{jk}+\omega_{lk}\overline{\omega}_{jk})e_{jl}(k)+(\omega_{lk}\lambda_{jk}-\lambda_{lk}\omega_{jk})h_{jl}(k)\notag \\
&+(\overline{\lambda}_{lk}\overline{\omega}_{jk}-\overline{\lambda}_{jk}\overline{\omega}_{lk})g_{jl}(k)+(\omega_{jk}\overline{\omega}_{lk}+\overline{\lambda}_{lk}\lambda_{jk})f_{jl}(k)\vert^2\notag
\\&\leq4{\rm E}\{\vert\lambda_{lk}\overline{\lambda}_{jk}+\omega_{lk}\overline{\omega}_{jk}\vert^2\vert e_{jl}(k)\vert^2+\vert\omega_{lk}\lambda_{jk}-\lambda_{lk}\omega_{jk}\vert^2\vert h_{jl}(k)\vert^2\notag \\
&+\vert\overline{\lambda}_{lk}\overline{\omega}_{jk}-\overline{\lambda}_{jk}\overline{\omega}_{lk}\vert^2\vert g_{jl}(k)\vert^2+\vert\omega_{jk}\overline{\omega}_{lk}+\overline{\lambda}_{lk}\lambda_{jk}\vert^2\vert f_{jl}(k)\vert^2\}\notag \\
&\leq 4{\rm E}(\vert e_{jl}(k)\vert^2+\vert f_{jl}(k)\vert^2+\vert h_{jl}(k)\vert^2+\vert g_{jl}(k)\vert^2)\notag
\end{align}
and for $j=l$,
\begin{align}
&{\rm E}\vert {\rm tr} (x_{jk}x_{jk}^*-I_2)R_k(j, j)\vert^2\notag \\
&={\rm E}\vert (|\lambda_{jk}|^2+|\omega_{jk}|^2-1)\rm{tr}
\left(\begin{array}{cc}
f_{jj}(k) & 0\\
0& f_{jj}(k)
\end{array}\right)\vert^2\notag \\
&={\rm E}\vert (|\lambda_{jk}|^2+|\omega_{jk}|^2-1)(2f_{jj}(k))\vert^2
\notag \\
&=4{\rm E}\vert (|\lambda_{jk}|^2+|\omega_{jk}|^2-1)\vert^2\vert  f_{jj}(k)\vert^2
\notag \\
&\leq 4\eta_n^2n{\rm E}(\vert e_{jj}(k)\vert^2+\vert f_{jj}(k)\vert^2+\vert h_{jj}(k)\vert^2+\vert g_{jj}(k)\vert^2).\notag
\end{align}
Therefore,  for all large $n$, we have
\begin{align}
\mbox E\vert \mbox{tr}Q_k^*R_kQ_k-\mbox{tr}R_k\vert^2
&\leq 8\sum_{j\neq k}^{}\sum_{j\neq l, l\neq k}^{}\mbox E(\vert e_{jl}(k)\vert^2+\vert f_{jl}(k)\vert^2+\vert h_{jl}(k)\vert^2+\vert g_{jl}(k)\vert^2)\notag \\&+4\eta_n^2n\sum_{j\neq k}^{}\mbox E(\vert e_{jl}(k)\vert^2+\vert f_{jl}(k)\vert^2+\vert h_{jl}(k)\vert^2+\vert g_{jl}(k)\vert^2)\notag \\
&\leq 4n\eta_n^2{\rm Etr}(R_k R_k^*).\notag
\end{align}
By the  fact $$\mbox {Etr}(R_k R_k^*)=2(n-1)\int_{-\infty}^{+\infty}\frac{1}{|x-z|^2}d\mbox EF_n^{(k)}(x)\leq 2n\upsilon^{-2},$$ where $F_n^{(k)}(x)$ is the ESD of $R_k$, we have
\begin{equation}\label{eq5}
\frac{1}{4n^2}\mbox E\vert \mbox{tr}Q_k^*R_kQ_k-\mbox{tr}R_k\vert^2\leq \frac{2\eta_n^2}{\upsilon^2} \to 0.
\end{equation}
Combining  (\ref{eq11}), (\ref{eq3}), (\ref{eq4}), and (\ref{eq5}), we obtain, for all large $n$,
\begin{equation}\label{eq13}
\mbox E|\mbox{tr}\varepsilon_k^2| \to 0.
\end{equation}
By (\ref{eq1}), we have
\begin{align}\label{al:1}
(\varepsilon_k-zI_2-\mbox Es_n(z)I_2)^{-1}
&=\left(\begin{array}{cc}
-z-\frac{1}{n}\alpha_k^*R_k\alpha_k & 0\\
0 & -z-\frac{1}{n}\beta_k^*R_k\beta_k
\end{array}\right)^{-1}\notag \\
&=\frac{1}{-z-\frac{1}{n}\alpha_k^*R_k\alpha_k}I_2.
\end{align}
Thus from (\ref{eq13}) and (\ref{al:1})
 we have
 \begin{align}\label{eq10}
 &\mbox E|\mbox{tr}(\varepsilon_k^2(\varepsilon_k-zI_2-s_n(z)I_2)^{-1})|\notag\\=&
\mbox E|\frac{\mbox{tr}\varepsilon_k^2}{-z-\frac{1}{n}\alpha_k^*R_k\alpha_k}|
\le\frac{\mbox E|\mbox{tr}\varepsilon_k^2|}{\upsilon} \to 0.
\end{align}
Here we use the fact that
 \begin{equation}
  \begin{split}
\Im(-z-\frac{1}{n}\alpha_k^*R_k{\alpha_k})=- \upsilon\left( {1 + \frac{1}{n}\alpha _k^ * {{ {R_kR_k^*} }}{\alpha _k}} \right)
 <  - \upsilon.
 \end{split}
\end{equation}
Therefore, by (\ref{eq7}), (\ref{eq9}),  (\ref{eq12}),  and (\ref{eq10}), we conclude that, for all large $n$,
\begin{align}
|\delta_n|  \to 0,
\end{align}
which completes the proof of the mean convergence $\mbox E{s_n}(z) \to s(z)$. And the proof of Theorem \ref{th1} is complete.

\section{Appendix}
Let us make a list of lemmas that were used in the process of the proofs of Lemma \ref{lemma:1} and  Theorem \ref{th1}.
\begin{lemma}[Corollary A.41 in \cite{BaiS10S}]\label{lemma:4}
Let $A$ and $B$ be two $n \times n$  normal matrices with their ESDs ${F^A}$ and ${F^B}$.  Then, $${L^3}({F^A}, {F^B}) \le \frac{1}{n}{\rm tr}[(A - B){(A - B)^ * }],$$
where $L(\cdot, \cdot)$ is the Levy distance between two distributions (See Remark A.39. in \cite{BaiS10S}).
\end{lemma}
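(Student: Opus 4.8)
The plan is to deduce the inequality from two ingredients: the Hoffman--Wielandt inequality, which pairs up the eigenvalues so that the sum of their squared differences is controlled by ${\rm tr}[(A-B)(A-B)^*]$, and an elementary estimate of the Levy distance by that same sum of squared differences. Concretely, let $\lambda_1,\dots,\lambda_n$ and $\mu_1,\dots,\mu_n$ be the eigenvalues of $A$ and $B$ listed in increasing order. The Hoffman--Wielandt inequality gives
\[ \frac1n\sum_{i=1}^n|\lambda_i-\mu_i|^2\le\frac1n\,{\rm tr}[(A-B)(A-B)^*] ; \]
in the Hermitian case --- the only one actually needed in this paper --- the monotone pairing used here is already the optimal one, so this step is exactly the classical Wielandt--Hoffman bound (equivalently, Lidskii's majorization applied to the convex function $x\mapsto x^2$). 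It therefore remains to show $L^3(F^A,F^B)\le\frac1n\sum_{i=1}^n|\lambda_i-\mu_i|^2$.

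For that, I would work straight from the definition $L(F,G)=\inf\{\varepsilon>0:\,F(x-\varepsilon)-\varepsilon\le G(x)\le F(x+\varepsilon)+\varepsilon\ \text{for all }x\}$; the case $L(F^A,F^B)=0$ is trivial, so suppose $L(F^A,F^B)>0$ and fix $\delta<L(F^A,F^B)$. Then $\delta$ fails the defining condition, so for some real $x_0$ one has, say, $F^B(x_0)>F^A(x_0+\delta)+\delta$ (the reversed inequality is symmetric, interchanging $A$ and $B$). Setting $a=nF^A(x_0+\delta)$ and $b=nF^B(x_0)$, both integers, we get $b-a>n\delta$, and since the smallest $a$ of the $\lambda_i$ are precisely those $\le x_0+\delta$ while the smallest $b$ of the $\mu_i$ are precisely those $\le x_0$, every index $i$ with $a<i\le b$ satisfies $\lambda_i>x_0+\delta$ and $\mu_i\le x_0$, hence $\lambda_i-\mu_i>\delta$. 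Consequently
\[ \sum_{i=1}^n|\lambda_i-\mu_i|^2\ge\sum_{a<i\le b}|\lambda_i-\mu_i|^2>(b-a)\delta^2>n\delta^3 , \]
so $\frac1n\sum_i|\lambda_i-\mu_i|^2>\delta^3$; letting $\delta\uparrow L(F^A,F^B)$ finishes the argument. (The cube on the left is exactly what this counting produces: a mass discrepancy of at least $n\delta$ forces at least $n\delta$ paired eigenvalues to be $\delta$ apart.)

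The only step that is not a routine calculation is the Hoffman--Wielandt inequality in full normal generality, and this is the part I would expect to need the most care; its proof passes through writing $A-B$ in the two eigenbases, expanding the Frobenius norm, and invoking Birkhoff's theorem to reduce the ensuing minimization over doubly stochastic matrices to a minimization over permutations (and, in the genuinely non-Hermitian case, one also has to read the Levy-distance estimate above on the plane rather than on the line). For the self-dual Hermitian matrices to which this lemma is applied in the body of the paper, however, it is the textbook Wielandt--Hoffman bound, so in practice no obstacle arises.
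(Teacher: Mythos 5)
Your two-step scheme (Hoffman--Wielandt to pair the eigenvalues, then an elementary estimate of the Levy distance by the mean squared gap of the paired eigenvalues) is exactly how this result is established in the source the paper cites; the paper itself gives no proof, quoting Corollary A.41 of Bai--Silverstein, so you are reproducing the intended argument rather than diverging from it. Your real-line counting step is correct: from $\delta<L$ you extract $x_0$ with $F^B(x_0)>F^A(x_0+\delta)+\delta$ (or the symmetric inequality), and the integers $a=nF^A(x_0+\delta)$, $b=nF^B(x_0)$ give $b-a>n\delta$ indices with $|\lambda_i-\mu_i|>\delta$ under the monotone pairing, hence $\frac1n\sum_i|\lambda_i-\mu_i|^2>\delta^3$; together with the Hermitian Wielandt--Hoffman bound for sorted eigenvalues this yields the lemma in the only case the paper uses. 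The one place where you stop short of the statement's full generality is the normal (non-Hermitian) case: there the spectra are complex, so the monotone pairing does not exist and your Levy argument, which leans on the total order of the real line, does not transfer as written -- you acknowledge this but do not carry it out. A cleaner route, and the one effectively used in the book, is order-free: with $d^3=\frac1n\sum_k|\lambda_k-\mu_{\pi(k)}|^2$ for the Hoffman--Wielandt permutation $\pi$, Chebyshev counting shows at most $nd$ indices satisfy $|\lambda_k-\mu_{\pi(k)}|>d$, which gives $F^A(x-d,y-d)-d\le F^B(x,y)\le F^A(x+d,y+d)+d$ directly for the two-dimensional ESDs, hence $L\le d$; this argument needs no sorting and covers the planar case (and, specialized to the line, also replaces your case analysis). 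For the self-dual Hermitian matrices treated in this paper, your proof is complete as it stands.
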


\begin{lemma}[Theorem A.43 in \cite{BaiS10S}]\label{lemma:3}
Let $A$ and $B$ be two $p \times n$ Hermitian matrices.  Then,  $$\left\| {{F^A} - {F^B}} \right\| \le \frac{1}{n}{\rm rank}(A - B). $$
\end{lemma}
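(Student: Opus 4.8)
The plan is the classical variational (subspace-counting) argument; what matters is that the matrices are Hermitian, so assume $A$ and $B$ are $n\times n$ Hermitian and that $\|\cdot\|$ denotes the Kolmogorov distance $\|F-G\|=\sup_{x}|F(x)-G(x)|$. Set $r=\operatorname{rank}(A-B)$ and, for a Hermitian matrix $H$, write $N_{H}(x)=\#\{i:\lambda_{i}(H)\le x\}$, so $F^{H}(x)=n^{-1}N_{H}(x)$. The single elementary ingredient I would record first is the identity
\[
n-N_{H}(x)=\#\{i:\lambda_{i}(H)>x\}=\max\bigl\{\dim V:\ v^{*}(H-xI)v>0\ \text{for all }v\in V\setminus\{0\}\bigr\},
\]
which is immediate from the spectral theorem: the span of the eigenvectors of $H$ with eigenvalue exceeding $x$ realizes the maximum, and any subspace of strictly larger dimension must meet the span of the remaining eigenvectors, hence contains a vector on which $H-xI$ is $\le 0$.

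With this in hand, fix $x$ and pick a subspace $V$ with $\dim V=n-N_{B}(x)$ on which $B-xI$ is positive definite. Since $\dim\ker(A-B)\ge n-r$, the subspace $V\cap\ker(A-B)$ has dimension at least $\dim V-r=n-N_{B}(x)-r$, and on it $A$ agrees with $B$, so $A-xI$ is positive definite there as well. Applying the identity above to $A$ gives $n-N_{A}(x)\ge n-N_{B}(x)-r$, that is, $N_{A}(x)\le N_{B}(x)+r$. Interchanging $A$ and $B$ (and using $\operatorname{rank}(B-A)=r$) gives the reverse inequality, so $|N_{A}(x)-N_{B}(x)|\le r$ for every $x$; dividing by $n$ and taking the supremum over $x$ yields $\|F^{A}-F^{B}\|\le r/n=n^{-1}\operatorname{rank}(A-B)$, as claimed.

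I do not expect a real obstacle. The one point that needs care is that the inequality in the variational identity must stay strict (``$>0$'', not ``$\ge 0$''): this is precisely what makes the count of eigenvalues that are $\le x$ come out correctly when $x$ happens to equal an eigenvalue of $A$ or of $B$, and hence makes the bookkeeping of the second paragraph clean. As an alternative one could instead invoke the interlacing bounds $\lambda_{i+r}(A)\le\lambda_{i}(B)\le\lambda_{i-r}(A)$ for a rank-$r$ perturbation (with $\lambda_{j}=+\infty$ for $j\le 0$ and $\lambda_{j}=-\infty$ for $j>n$) and read off $|N_{A}(x)-N_{B}(x)|\le r$ directly, but the subspace version has the merit of being self-contained.
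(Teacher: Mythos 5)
Your proof is correct. Note that the paper itself gives no argument for this lemma at all: it is imported verbatim as Theorem A.43 of Bai and Silverstein's book, so there is no internal proof to compare against. Your subspace-counting argument via the variational identity $n-N_H(x)=\max\{\dim V:\ v^*(H-xI)v>0 \ \text{for all nonzero } v\in V\}$, intersected with $\ker(A-B)$, is the standard route and is essentially equivalent to the interlacing proof used in the cited reference (your closing remark about $\lambda_{i+r}(A)\le\lambda_i(B)\le\lambda_{i-r}(A)$ is exactly that variant). Two smaller points you handled well: the statement's ``$p\times n$ Hermitian'' is a typo carried over from the singular-value version of the rank inequality, and reading it as $n\times n$ Hermitian is the right fix; and insisting on strict positivity in the variational characterization is precisely what keeps the count of eigenvalues $\le x$ correct when $x$ coincides with an eigenvalue of $A$ or $B$.
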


\begin{lemma}[See appendix A.1.4 in \cite{BaiS10S}]\label{lemma:2}
Suppose that the matrix $\Sigma $ is nonsingular and has the partition as given by $\left( {\begin{array}{*{20}{c}}
{{\Sigma _{11}}}&{{\Sigma _{12}}}\\
{{\Sigma _{21}}}&{{\Sigma _{22}}}
\end{array}} \right),$ then, if $\Sigma_{11}$ is  nonsingular, the inverse of $\Sigma $ has the form $$\left( {\begin{array}{*{20}{c}}
{\Sigma _{11}^{ - 1} + \Sigma _{11}^{ - 1}\Sigma _{12}^{}\Sigma _{22. 1}^{ - 1}\Sigma _{21}^{}\Sigma _{11}^{ - 1}}&{ - \Sigma _{11}^{ - 1}\Sigma _{12}^{}\Sigma _{22. 1}^{ - 1}}\\
{ - \Sigma _{22. 1}^{ - 1}\Sigma _{21}^{}\Sigma _{11}^{ - 1}}&{\Sigma _{22. 1}^{ - 1}}
\end{array}} \right)$$
where $\Sigma _{22. 1}= \Sigma _{22}^{} - \Sigma _{21}^{}\Sigma _{11}^{ - 1}\Sigma _{12}^{}$.
\end{lemma}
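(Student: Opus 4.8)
The plan is to establish the formula via a block \textbf{LU} factorization of $\Sigma$ and then invert the factors. First I would exhibit the candidate factorization
\[
\Sigma=\left(\begin{array}{cc} I & 0 \\ \Sigma_{21}\Sigma_{11}^{-1} & I\end{array}\right)\left(\begin{array}{cc}\Sigma_{11} & 0 \\ 0 & \Sigma_{22.1}\end{array}\right)\left(\begin{array}{cc} I & \Sigma_{11}^{-1}\Sigma_{12} \\ 0 & I\end{array}\right),
\]
where $\Sigma_{22.1}=\Sigma_{22}-\Sigma_{21}\Sigma_{11}^{-1}\Sigma_{12}$, and check it by carrying out the two block multiplications: the top block-row and the $(2,1)$ block reproduce $\Sigma_{11},\Sigma_{12},\Sigma_{21}$ at once, while the $(2,2)$ block comes out to $\Sigma_{21}\Sigma_{11}^{-1}\Sigma_{12}+\Sigma_{22.1}=\Sigma_{22}$. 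This step uses only that $\Sigma_{11}$ is invertible, so that the products $\Sigma_{21}\Sigma_{11}^{-1}$ and $\Sigma_{11}^{-1}\Sigma_{12}$ are defined.

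Next I would observe that the outer two factors are unipotent block-triangular matrices, hence invertible, with
\[
\left(\begin{array}{cc} I & 0 \\ \Sigma_{21}\Sigma_{11}^{-1} & I\end{array}\right)^{-1}=\left(\begin{array}{cc} I & 0 \\ -\Sigma_{21}\Sigma_{11}^{-1} & I\end{array}\right),\qquad \left(\begin{array}{cc} I & \Sigma_{11}^{-1}\Sigma_{12} \\ 0 & I\end{array}\right)^{-1}=\left(\begin{array}{cc} I & -\Sigma_{11}^{-1}\Sigma_{12} \\ 0 & I\end{array}\right).
\]
Taking determinants in the factorization gives $\det\Sigma=\det\Sigma_{11}\cdot\det\Sigma_{22.1}$, so since $\Sigma$ and $\Sigma_{11}$ are nonsingular the Schur complement $\Sigma_{22.1}$ is nonsingular as well; hence the middle factor is invertible with inverse $\mathrm{diag}(\Sigma_{11}^{-1},\Sigma_{22.1}^{-1})$.

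Finally, inverting the product in reverse order,
\[
\Sigma^{-1}=\left(\begin{array}{cc} I & -\Sigma_{11}^{-1}\Sigma_{12} \\ 0 & I\end{array}\right)\left(\begin{array}{cc}\Sigma_{11}^{-1} & 0 \\ 0 & \Sigma_{22.1}^{-1}\end{array}\right)\left(\begin{array}{cc} I & 0 \\ -\Sigma_{21}\Sigma_{11}^{-1} & I\end{array}\right),
\]
and multiplying out the three $2\times 2$ block matrices yields $(1,1)$ block $\Sigma_{11}^{-1}+\Sigma_{11}^{-1}\Sigma_{12}\Sigma_{22.1}^{-1}\Sigma_{21}\Sigma_{11}^{-1}$, $(1,2)$ block $-\Sigma_{11}^{-1}\Sigma_{12}\Sigma_{22.1}^{-1}$, $(2,1)$ block $-\Sigma_{22.1}^{-1}\Sigma_{21}\Sigma_{11}^{-1}$, and $(2,2)$ block $\Sigma_{22.1}^{-1}$, which is exactly the claimed expression. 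A routine alternative is to take the displayed matrix as a candidate and verify directly that its product with $\Sigma$ on either side equals the identity, block by block; this avoids appealing to invertibility of $\Sigma_{22.1}$ in advance, at the cost of noting separately that the inverse blocks are well defined. There is no genuine obstacle in either route: the one point worth a sentence is the invertibility of $\Sigma_{22.1}$, handled by the determinant identity above, and everything else is elementary $2\times 2$ block bookkeeping.
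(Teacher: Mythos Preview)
Your proof is correct and entirely standard: the block LDU factorization, the determinant identity giving invertibility of the Schur complement, and the final back-multiplication all go through exactly as you describe. Note that the paper itself does not prove this lemma at all; it is simply quoted from Appendix~A.1.4 of Bai--Silverstein, so there is no ``paper's own proof'' to compare against beyond observing that your argument is one of the usual textbook derivations.
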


\begin{lemma}[Theorem A.4 in \cite{BaiS10S}]\label{lemma:9}
For an $n\times n$ Hermitian $A$, define $A_k$, called a major submatrix of order $n-1$, to be the matrix resulting from the $k$-th row and column from $A$.
If both $A$ and $A_k$, $k=1, \cdots, n$,  are nonsigular, and if we write $A^{-1}=[a^{kl}]$, then
\begin{displaymath}
a^{kk}=\frac{1}{a_{kk}-\alpha_k^*A_k^{-1}\beta_k}
\end{displaymath}
and hence
\begin{displaymath}
{\rm tr}(A^{-1})=\sum_{k=1}^{n}\frac{1}{a_{kk}-\alpha_k^*A_k^{-1}\beta_k},
\end{displaymath}
where $a_{kk}$ is the $k$-th diagonal entry of $A$, $\alpha_k^{\prime}$ is the vector obtained from the $k$-th row of $A$ by deleting the $k$-th entry, and $\beta_k$ is the vector from the $k$-th column by deleting the $k$-th entry.
\end{lemma}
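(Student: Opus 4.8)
The plan is to derive the diagonal-entry formula directly from the block-partition formula for the inverse of a partitioned matrix, after a suitable permutation that moves the $k$-th row and column to the front. Since the statement is symmetric in $k$, it suffices to prove the identity for $k = n$; the general case follows by conjugating $A$ with the permutation matrix that swaps indices $k$ and $n$ (such a conjugation permutes the rows and columns of $A^{-1}$ correspondingly, and sends $A_k$ to $A_n$, $\alpha_k$ to $\alpha_n$, $\beta_k$ to $\beta_n$).

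First I would write $A$ in the block form
\[
A = \left( \begin{array}{cc} A_n & \beta_n \\ \alpha_n^* & a_{nn} \end{array} \right),
\]
where $A_n$ is the leading $(n-1)\times(n-1)$ principal submatrix, $\beta_n$ is the first $n-1$ entries of the last column, and $\alpha_n^*$ is the first $n-1$ entries of the last row (consistent with the notation in the statement, noting that for Hermitian $A$ one has $\alpha_n^* = \beta_n^*$, but we do not need that). Both $A$ and $A_n$ are assumed nonsingular, so Lemma \ref{lemma:2} applies with $\Sigma_{11} = A_n$, $\Sigma_{12} = \beta_n$, $\Sigma_{21} = \alpha_n^*$, $\Sigma_{22} = a_{nn}$, giving a Schur complement $\Sigma_{22.1} = a_{nn} - \alpha_n^* A_n^{-1}\beta_n$, which is a scalar. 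The $(2,2)$-block of the inverse furnished by Lemma \ref{lemma:2} is exactly $\Sigma_{22.1}^{-1}$, and this block is the single entry $a^{nn}$ of $A^{-1}$. Hence
\[
a^{nn} = \frac{1}{a_{nn} - \alpha_n^* A_n^{-1}\beta_n},
\]
which is the claimed formula for $k = n$. Note this computation also shows $\Sigma_{22.1}\neq 0$, consistent with $A$ being nonsingular.

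Finally, summing over $k = 1, \dots, n$ gives
\[
\mathrm{tr}(A^{-1}) = \sum_{k=1}^n a^{kk} = \sum_{k=1}^n \frac{1}{a_{kk} - \alpha_k^* A_k^{-1}\beta_k}.
\]
There is no real obstacle here: the only point requiring a line of care is the reduction from general $k$ to $k = n$ via the symmetric permutation, and checking that the permutation acts compatibly on $A^{-1}$, on $A_k$, and on the vectors $\alpha_k, \beta_k$ — all of which are immediate from the definition of conjugation by a transposition matrix. The heart of the argument is simply reading off the $(2,2)$-block in Lemma \ref{lemma:2}.
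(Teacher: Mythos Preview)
Your argument is correct. Note, however, that the paper does not actually supply a proof of this lemma: it is listed in the Appendix as a result quoted from \cite{BaiS10S} (Theorem A.4 there), without any accompanying argument. So there is no ``paper's own proof'' to compare against. That said, your derivation via the block-inversion formula of Lemma~\ref{lemma:2} is precisely the standard way this identity is obtained, and it goes through cleanly: with $\Sigma_{11}=A_n$ assumed invertible, the $(2,2)$-block of the inverse in Lemma~\ref{lemma:2} is the scalar Schur complement $\Sigma_{22.1}^{-1}=(a_{nn}-\alpha_n^*A_n^{-1}\beta_n)^{-1}$, which is exactly $a^{nn}$; the permutation reduction and the trace summation are routine.
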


\begin{lemma}[See appendix A.1.5 in \cite{BaiS10S}]\label{lemma:7}
Let $z = u + iv, v > 0, $ and let $A$ be an $n \times n$ Hermitian matrix.  ${A_k}$ be the k-th major sub-matrix of $A$ of order $(n-1)$, to be the matrix resulting from the $k$-th row and column from $A$.  Then
$$\left| {{\rm tr}{{(A - z{I_n})}^{ - 1}} -{ \rm tr}{{({A_k} - z{I_{n - 1}})}^{ - 1}}} \right| \le \frac{1}{v}.$$
\end{lemma}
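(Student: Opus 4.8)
The plan is to rewrite the trace difference as the logarithmic derivative of a single diagonal entry of the resolvent, and then to exploit the fact that this entry is the Stieltjes transform of a probability measure.

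First I would record the basic identity. By Jacobi's formula, $\frac{d}{dz}\det(A-zI_n) = -\det(A-zI_n)\,{\rm tr}(A-zI_n)^{-1}$, so ${\rm tr}(A-zI_n)^{-1} = -\frac{d}{dz}\log\det(A-zI_n)$ (and likewise with $A_k$ in place of $A$). Subtracting the two relations and using Cramer's rule for the $(k,k)$ entry of an inverse — the $(k,k)$ cofactor of $A-zI_n$ is exactly $\det(A_k-zI_{n-1})$ — gives
\[
{\rm tr}(A-zI_n)^{-1}-{\rm tr}(A_k-zI_{n-1})^{-1}=\frac{d}{dz}\log\frac{\det(A_k-zI_{n-1})}{\det(A-zI_n)}=\frac{d}{dz}\log m_k(z)=\frac{m_k'(z)}{m_k(z)},
\]
where $m_k(z):=\big[(A-zI_n)^{-1}\big]_{kk}$. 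Thus it suffices to bound $|m_k'(z)/m_k(z)|$ by $1/v$.

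Next I would give the spectral representation of $m_k$. Writing $A=\sum_{i=1}^n\lambda_iu_iu_i^*$ with $\lambda_i\in\mathbb{R}$ and $\{u_i\}$ orthonormal, one gets $m_k(z)=e_k^*(A-zI_n)^{-1}e_k=\sum_{i=1}^n\frac{|u_i^*e_k|^2}{\lambda_i-z}=\int\frac{d\nu_k(\lambda)}{\lambda-z}$, where $\nu_k:=\sum_{i=1}^n|u_i^*e_k|^2\,\delta_{\lambda_i}$ is a probability measure because $\sum_i|u_i^*e_k|^2=\|e_k\|^2=1$. In particular $\Im m_k(z)=v\int|\lambda-z|^{-2}\,d\nu_k(\lambda)>0$, so $m_k(z)\neq0$ on $\mathbb{C}^+$; this also legitimizes the logarithmic-derivative step above. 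Differentiating under the (finite) integral, $m_k'(z)=\int(\lambda-z)^{-2}\,d\nu_k(\lambda)$, whence $|m_k'(z)|\le\int|\lambda-z|^{-2}\,d\nu_k(\lambda)=\Im m_k(z)/v\le|m_k(z)|/v$, using $\Im w\le|w|$. Therefore $|m_k'(z)/m_k(z)|\le 1/v$, which by the identity of Step 1 is exactly the asserted inequality. The degenerate case $n=1$ (where $A_k$ is empty and its trace term is $0$) is covered verbatim with $m_1(z)=1/(a-z)$.

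I expect no genuine analytic obstacle here; the only points needing care are the justification of the logarithmic-derivative identity as a valid global identity of meromorphic functions on $\mathbb{C}^+$ — resolved by the nonvanishing of $m_k$ from the spectral representation — and the differentiation under the integral sign, which is immediate since $\nu_k$ is finitely supported on $\mathbb{R}$ while $z$ stays at distance at least $v$ from that support. (As a sanity check, the same formula $m_k'/m_k=\sum_i(\lambda_i-z)^{-1}-\sum_j(\mu_j-z)^{-1}$ also follows from the Cauchy interlacing of the eigenvalues $\{\lambda_i\}$ of $A$ and $\{\mu_j\}$ of $A_k$, via $m_k(z)=\prod_j(\mu_j-z)/\prod_i(\lambda_i-z)$.)
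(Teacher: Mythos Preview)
Your proof is correct. Note that the paper does not actually prove this lemma; it merely cites Appendix~A.1.5 of Bai--Silverstein, so there is no in-paper argument to compare against directly.

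That said, your route differs from the standard textbook one. The usual argument observes, via Cauchy interlacing, that $G(x):=nF^{A}(x)-(n-1)F^{A_k}(x)$ is a genuine probability distribution function on $\mathbb{R}$; the trace difference is then $\int(x-z)^{-1}\,dG(x)$, whose modulus is at most $\int|x-z|^{-1}\,dG(x)\le 1/v$. You instead identify the trace difference with the logarithmic derivative $m_k'(z)/m_k(z)$ of the $(k,k)$ resolvent entry and exploit that $m_k$ is the Stieltjes transform of a probability measure, giving $|m_k'|\le \Im m_k/v\le |m_k|/v$. The two arguments are essentially dual: the interlacing proof packages the difference as a Stieltjes transform of a measure built from the two spectra, while yours packages it through a single spectral measure $\nu_k$ for $e_k$. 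Your version has the mild advantage of avoiding any appeal to interlacing, at the cost of a short detour through Jacobi's formula and Cramer's rule; both yield the same sharp bound $1/v$.
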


\begin{lemma}[Lemma 2.11 in \cite{BaiS10S}]\label{lemma:6}
Let $z = u + iv, v > 0, $ $s(z)$ be the Stieltjes transform of the semicircular law.  Then,  we have $s(z) =  - \frac{1}{2}(z - \sqrt {{z^2} - 4} )$.
\end{lemma}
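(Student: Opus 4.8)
The plan is to compute the Stieltjes transform of the (standard, $\sigma=1$) semicircular law directly from its density $f(x)=\frac1{2\pi}\sqrt{4-x^2}$ on $[-2,2]$, and then recognize that the resulting function solves the quadratic $s^2+zs+1=0$, from which the closed form is immediate. Concretely I would start from
\[
s(z)=\int\frac{dF(y)}{y-z}=\frac{1}{2\pi}\int_{-2}^{2}\frac{\sqrt{4-y^2}}{y-z}\,dy ,
\]
which is analytic on $\mathbb C^{+}$ (indeed on $\mathbb C\setminus[-2,2]$), maps $\mathbb C^{+}$ into $\mathbb C^{+}$, and satisfies $|s(z)|\le 1/\Im z$, so that $s(z)\to0$ as $\Im z\to\infty$. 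These three properties are precisely what will let me pin down the correct branch of the square root at the end.

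For the main step I would show $s(z)^2+z\,s(z)+1=0$ on $\mathbb C^{+}$. The cleanest route is the moment expansion: for $|z|$ large, $s(z)=-\sum_{k\ge0}m_k z^{-(k+1)}$ with $m_k=\int y^k\,dF(y)$; the odd moments vanish by symmetry, and the substitution $y=2\cos\theta$ gives $m_{2k}=\frac{2}{\pi}\int_0^{\pi}(2\cos\theta)^{2k}\sin^2\theta\,d\theta=C_k$, the $k$-th Catalan number. Hence $s(z)=-z^{-1}g(z^{-2})$ where $g(t)=\sum_{k\ge0}C_k t^k$, and the classical identity $t\,g(t)^2-g(t)+1=0$ satisfied by the Catalan generating function translates, after $t=z^{-2}$ and multiplication by $z$, exactly into $s(z)^2+z\,s(z)+1=0$; this holds for large $|z|$ and then on all of $\mathbb C^{+}$ by analytic continuation. (An alternative is to evaluate the integral head-on: with $y=2\cos\theta$ it becomes $\frac{2}{\pi}\int_0^{\pi}\frac{\sin^2\theta}{2\cos\theta-z}\,d\theta$, which one computes by passing to $w=e^{i\theta}$ on the unit circle and collecting residues; this yields the closed form in one shot but requires more care with the contour.)

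Solving $s^2+zs+1=0$ gives $s(z)=\tfrac12\big(-z\pm\sqrt{z^2-4}\big)$, where $\sqrt{z^2-4}$ denotes the branch on $\mathbb C\setminus[-2,2]$ behaving like $z$ at infinity. To fix the sign I use $s(z)\to0$ as $\Im z\to\infty$: there $\tfrac12(-z+\sqrt{z^2-4})\to0$ whereas $\tfrac12(-z-\sqrt{z^2-4})\to\infty$, so the $+$ sign is forced on a neighbourhood of infinity, and hence, by analyticity and connectedness of $\mathbb C^{+}$, everywhere; the condition $\Im s(z)>0$ near the cut gives the same conclusion. Therefore $s(z)=\tfrac12(-z+\sqrt{z^2-4})=-\tfrac12(z-\sqrt{z^2-4})$, as asserted.

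The only real obstacle I anticipate is the bookkeeping around the square root: specifying the branch of $\sqrt{z^2-4}$ consistently so that the identity is valid on all of $\mathbb C^{+}$ rather than merely for large $|z|$, and — if one takes the direct-integration route instead — tracking which root of the resulting quadratic in $w$ lies inside the unit disc. The moment computation $m_{2k}=C_k$ and the Catalan generating-function identity are standard and routine.
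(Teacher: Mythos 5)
Your proposal is correct: the moment expansion $s(z)=-\sum_{k\ge0}m_k z^{-(k+1)}$ with $m_{2k}=C_k$, the Catalan identity $tg(t)^2-g(t)+1=0$ giving $s^2+zs+1=0$ for large $|z|$ and hence on all of $\mathbb{C}^+$ by analytic continuation, and the branch selection via $s(z)\to0$ as $\Im z\to\infty$ (the two candidate roots never coincide on $\mathbb{C}^+$ since $z^2-4\ne0$ there, so connectedness pins the sign globally) together constitute a complete and standard derivation. Note, however, that the paper itself offers no proof of this statement: it is quoted verbatim as Lemma 2.11 of Bai and Silverstein and used as a black box, so there is no internal argument to compare against. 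What your write-up adds is a self-contained verification along the classical lines (one could equally evaluate the integral directly with $y=2\cos\theta$ and a residue computation on the unit circle, or observe that the limiting self-consistent equation $s=-1/(z+s)$ appearing in Step 2 of the paper's proof of Theorem 1.2 yields the same quadratic); your Catalan-moment route avoids contour bookkeeping at the cost of invoking the generating-function identity, and your handling of the branch of $\sqrt{z^2-4}$ is the part most often glossed over and is done correctly here.
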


\begin{lemma}[Lemma 2.12 in \cite{BaiS10S}]\label{lemma:8}
Let $\{ {\tau_k}\} $ be a complex martingale difference sequence with respect to the increasing $\sigma  - field$  .  Then ,  for $p > 1, $ ${\rm E}{\left| {\sum {{\tau_k}} } \right|^p} \le {K_p}{\rm E}{({\sum {\left| {{\tau_k}} \right|} ^2})^{p/2}}.$
\end{lemma}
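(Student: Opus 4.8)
The statement is the Burkholder inequality for martingale difference sequences; the plan is to reduce to real-valued differences and then run the classical second-order expansion, closing the estimate with Doob's maximal inequality. \emph{Reduction.} First I would observe that $\{\Re\tau_k\}$ and $\{\Im\tau_k\}$ are real martingale difference sequences for the same filtration, that $|a+\sqrt{-1}\,b|^p=(a^2+b^2)^{p/2}\le C_p(|a|^p+|b|^p)$ (with $C_p=1$ for $1<p\le 2$ by subadditivity of $t\mapsto t^{p/2}$ on $[0,\infty)$, and $C_p=2^{p/2-1}$ for $p\ge 2$ by convexity), and that $\big(\sum_k(\Re\tau_k)^2\big)^{p/2}\vee\big(\sum_k(\Im\tau_k)^2\big)^{p/2}\le\big(\sum_k|\tau_k|^2\big)^{p/2}$; together these reduce the claim to a real martingale difference sequence $\{d_k\}$. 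Write $S_0:=0$, $S_n:=\sum_{k=1}^n d_k$, $S_n^{*}:=\max_{0\le k\le n}|S_k|$ and $\langle S\rangle_n:=\sum_{k=1}^n d_k^2$. We may assume ${\rm E}\langle S\rangle_n^{p/2}<\infty$; then $\langle S\rangle_n\ge d_k^2$ forces $d_k\in L^p$ for each $k$, so $S_n\in L^p$ and $S_n^{*}\in L^p$.

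\emph{The case $p\ge 2$.} Taylor's formula with integral remainder for the $C^2$ function $t\mapsto|t|^p$, together with $|x+ty|\le|x|+|y|$ and $p-2\ge 0$, gives for all real $x,y$
\[
|x+y|^p\le|x|^p+p\,|x|^{p-1}{\rm sgn}(x)\,y+\tfrac{p(p-1)}{2}\,(|x|+|y|)^{p-2}\,y^2 .
\]
Taking $x=S_{k-1}$, $y=d_k$, summing over $k$ and taking expectations, the first-order term vanishes because ${\rm E}\big[|S_{k-1}|^{p-1}{\rm sgn}(S_{k-1})\,{\rm E}_{k-1}d_k\big]=0$; using $|S_{k-1}|\le S_n^{*}$ and $|d_k|\le 2S_n^{*}$ one gets $(|S_{k-1}|+|d_k|)^{p-2}\le 3^{p-2}(S_n^{*})^{p-2}$, hence by Hölder's inequality with exponents $\tfrac{p}{p-2}$ and $\tfrac{p}{2}$,
\[
{\rm E}|S_n|^p\le\tfrac{p(p-1)}{2}\,3^{p-2}\,{\rm E}\big[(S_n^{*})^{p-2}\langle S\rangle_n\big]\le C_p\big({\rm E}(S_n^{*})^p\big)^{\frac{p-2}{p}}\big({\rm E}\langle S\rangle_n^{p/2}\big)^{\frac{2}{p}} .
\]
Doob's $L^p$ maximal inequality gives ${\rm E}(S_n^{*})^p\le(\tfrac{p}{p-1})^p{\rm E}|S_n|^p$; inserting this and, when ${\rm E}|S_n|^p>0$, dividing by the finite quantity $({\rm E}|S_n|^p)^{(p-2)/p}$ yields ${\rm E}|S_n|^p\le K_p\,{\rm E}\langle S\rangle_n^{p/2}$ with $K_p$ depending only on $p$. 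The value $p=4$ needed in Step~1 of the proof of Theorem~\ref{th1} is then immediate.

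\emph{The case $1<p<2$.} Here the two-term expansion is not available and a genuinely different argument is required; I would prove a good-$\lambda$ inequality ${\rm P}\big(S_n^{*}>\beta\lambda,\ \langle S\rangle_n^{1/2}\le\delta\lambda\big)\le\varepsilon(\beta,\delta)\,{\rm P}(S_n^{*}>\lambda)$ for $\lambda>0$, obtained by stopping $S$ at the first time $|S_k|$ exceeds $\lambda$ and the first time $\langle S\rangle_k$ exceeds $\delta^2\lambda^2$ and applying the elementary $L^2$ bound to the stopped increment, then integrate against $p\lambda^{p-1}\,d\lambda$ and choose $\beta$ fixed with $\delta$ small to obtain ${\rm E}(S_n^{*})^p\le K_p\,{\rm E}\langle S\rangle_n^{p/2}$ for every $p>0$, whence ${\rm E}|S_n|^p\le{\rm E}(S_n^{*})^p$. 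I expect this good-$\lambda$ estimate to be the main obstacle, the range $p\ge 2$ being routine once the Taylor bound and Doob's inequality are in hand; as only $p=4$ is used in the present paper, this case may alternatively be quoted from \cite{BaiS10S}.
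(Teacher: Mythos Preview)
The paper does not prove this lemma; it is simply quoted from \cite{BaiS10S} (Lemma~2.12 there) and used as a black box in Step~1 of the proof of Theorem~\ref{th1}, and only for $p=4$. Your argument is a correct and standard proof of Burkholder's inequality: the reduction to real differences is routine, the $p\ge 2$ case via the second-order Taylor bound for $t\mapsto|t|^p$, H\"older with exponents $\tfrac{p}{p-2},\tfrac{p}{2}$, and Doob's maximal inequality is the classical route, and your good-$\lambda$ sketch for $1<p<2$ is also the standard device (and, as you note, unnecessary for the application here). There is nothing in the paper itself to compare against.
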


\begin{lemma}[Page 29 in \cite{BaiS10S}]\label{lemma:5}
Let $ \tau_j $ are independent with zero means,   then we have,  for some constant $C_k$,
\begin{displaymath}
{\rm E}|\sum \tau_j|^{2k} \leq C_k(\sum{\rm E}|\tau_j|^{2k}+(\sum {\rm E}|\tau_j|^2)^k).
\end{displaymath}
\end{lemma}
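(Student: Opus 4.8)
The plan is to prove the stated bound for $p=2k$ an even integer by the classical moment–expansion (Rosenthal) argument, with $A:=\sum_j{\rm E}|\tau_j|^2$ and $B:=\sum_j{\rm E}|\tau_j|^{2k}$ playing the roles of the two terms on the right; I may assume $B<\infty$, the case $B=\infty$ being trivial. First I would dispose of $k=1$: since the $\tau_j$ are independent with mean zero they are orthogonal in $L^2$, so ${\rm E}\bigl|\sum_j\tau_j\bigr|^2=\sum_j{\rm E}|\tau_j|^2$ and the inequality holds with $C_1=1$. From now on take $k\ge2$ and write $S=\sum_j\tau_j$. (If the $\tau_j$ are real, then $\bar\tau_j=\tau_j$ below and the argument is unchanged.)

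The first main step is the expansion
$$
{\rm E}|S|^{2k}={\rm E}\Bigl[\Bigl(\sum_j\tau_j\Bigr)^{k}\Bigl(\sum_l\bar\tau_l\Bigr)^{k}\Bigr]=\sum_{j_1,\dots,j_k}\ \sum_{l_1,\dots,l_k}{\rm E}\bigl[\tau_{j_1}\cdots\tau_{j_k}\,\bar\tau_{l_1}\cdots\bar\tau_{l_k}\bigr].
$$
By independence the expectation of each monomial factors over the distinct index values occupying its $2k$ slots, and a value occurring in exactly one slot contributes a factor ${\rm E}\tau_v$ or ${\rm E}\bar\tau_v$, i.e.\ $0$. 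Hence only the \emph{surviving} monomials contribute — those in which every value occurs in at least two slots — and such a monomial uses $r\le k$ distinct values $v_1,\dots,v_r$ with multiplicities $m_1,\dots,m_r\ge2$, $\sum_s m_s=2k$; by the elementary bound $|{\rm E}[\tau_v^{p}\bar\tau_v^{q}]|\le{\rm E}|\tau_v|^{p+q}$ its modulus is at most $\prod_{s=1}^r{\rm E}|\tau_{v_s}|^{m_s}$. Next I would group the surviving monomials by their \emph{shape} — the set partition of the $2k$ slots into the blocks carrying a common value — of which there are at most a constant $P_k$ (a Bell-type number) depending on $k$ alone; for each fixed shape, summing over which distinct values fill its blocks and then dropping the distinctness restriction (the summands are nonnegative) gives
$$
{\rm E}|S|^{2k}\ \le\ \sum_{\text{shapes}}\ \prod_{s=1}^{r}\Bigl(\sum_v{\rm E}|\tau_v|^{m_s}\Bigr).
$$

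The remaining step, and the delicate one, is to bound each factor $\sum_v{\rm E}|\tau_v|^{m}$ with $2\le m\le2k$ and multiply them so that the outcome is linear in $B$. A crude estimate is not enough: using $|\tau|^m\le|\tau|^2+|\tau|^{2k}$ would yield $(A+B)^r$ with $r$ up to $k$, hence a term $B^k$, which is too large. Instead I would interpolate with the exact exponent $\theta=\tfrac{m-2}{2k-2}\in[0,1]$: from $|\tau|^m=|\tau|^{2(1-\theta)}|\tau|^{2k\theta}$ and Lyapunov's inequality, ${\rm E}|\tau_v|^m\le({\rm E}|\tau_v|^2)^{1-\theta}({\rm E}|\tau_v|^{2k})^{\theta}$, and then \hi\ over $v$ gives $\sum_v{\rm E}|\tau_v|^m\le A^{1-\theta}B^{\theta}$ (the endpoints $\theta\in\{0,1\}$ being read off directly). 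For a shape with block sizes $m_1,\dots,m_r$ one has $\sum_s\tfrac{m_s-2}{2k-2}=\tfrac{2k-2r}{2k-2}=\tfrac{k-r}{k-1}$, so
$$
\prod_{s=1}^{r}\Bigl(\sum_v{\rm E}|\tau_v|^{m_s}\Bigr)\ \le\ A^{\,k(r-1)/(k-1)}\,B^{\,(k-r)/(k-1)}\ \le\ A^k+B,
$$
the last inequality being the weighted AM--GM (Young) inequality applied to $(A^k)^{(r-1)/(k-1)}B^{(k-r)/(k-1)}$, whose exponents sum to $1$. Summing over the at most $P_k$ shapes gives ${\rm E}|S|^{2k}\le P_k(A^k+B)$, i.e.\ the claim with $C_k=P_k$.

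The routine ingredients — the inequality $|{\rm E}[\tau^p\bar\tau^q]|\le{\rm E}|\tau|^{p+q}$, and verifying that the combinatorial multiplicities produced by the grouping are bounded by a constant depending only on $k$ (and can be absorbed into $C_k$) — I would not dwell on. The one genuinely load-bearing point is the tuned interpolation of the third paragraph: the interpolation exponents over the blocks of a shape must add up to exactly $\tfrac{k-r}{k-1}$ so that, after \hi\ and multiplication, the total power of $B$ is precisely $1$; only then does Young's inequality bring the estimate into the required form $A^k+B$. This balancing is the crux of the argument.
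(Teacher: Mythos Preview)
The paper does not prove this lemma at all; it is stated as a known inequality and attributed to page~29 of \cite{BaiS10S} (it is the Rosenthal inequality for even moments), so there is no argument in the paper to compare against. Your proof is correct and self-contained: the moment expansion, the vanishing of every monomial with a singleton index via the zero-mean hypothesis, the grouping by set partitions of the $2k$ slots (at most $B_{2k}$ of them), and then the Lyapunov--\hi\ interpolation $\sum_v{\rm E}|\tau_v|^m\le A^{1-\theta}B^{\theta}$ with $\theta=(m-2)/(2k-2)$ are all sound. The crucial bookkeeping identity $\sum_{s}\theta_s=(k-r)/(k-1)$ is exactly what forces the total exponent of $B$ to be at most~$1$, after which writing $A^{k(r-1)/(k-1)}B^{(k-r)/(k-1)}=(A^k)^{(r-1)/(k-1)}B^{(k-r)/(k-1)}$ and applying weighted AM--GM yields $A^k+B$ as required. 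In short, you have supplied a complete proof where the paper simply cites one.
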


%

\begin{thebibliography}{10}

\bibitem{AndersonG10I}
G.~W. Anderson, A.~Guionnet, and O.~Zeitouni.
\newblock {\em {An introduction to random matrices}}.
\newblock Cambridge University Press, 2010.

\bibitem{BaiC09R}
Z.~D. Bai, Y.~Chen, and Y.~C. Liang.
\newblock {\em {Random matrix theory and its applications: multivariate
  statistics and wireless communications}}, volume~1.
\newblock World Scientific Publishing Company, 2009.

\bibitem{BaiH11N}
Z.~D. Bai, J.~Hu, G.~Pan, and W.~Zhou.
\newblock {A Note on Rate of Convergence in Probability to Semicircular Law}.
\newblock {\em Electronic Journal of Probability}, 16(88):2439--2451, 2011.

\bibitem{BaiS10S}
Z.~D. Bai and J.~W. Silverstein.
\newblock {\em {Spectral analysis of large dimensional random matrices. Second
  Edition}}.
\newblock Springer Verlag, 2010.

\bibitem{ChakrabartyH13L}
A.~Chakrabarty, R.~S. Hazra, and D.~Sarkar.
\newblock Limiting spectral distribution for wigner matrices with dependent
  entries.
\newblock {\em arXiv preprint arXiv:1304.3394}, 2013.

\bibitem{CouilletD11R}
R.~Couillet and M.~Debbah.
\newblock {\em {Random matrix methods for wireless communications}}.
\newblock Cambridge Univ Pr, 2011.

\bibitem{ErdHos11U}
L.~Erd\H{o}s.
\newblock {Universality of Wigner random matrices: a survey of recent results}.
\newblock {\em Russian Mathematical Surveys}, 66(3):507--626, June 2011.

\bibitem{ErdHosY11B}
L.~Erd\H{o}s, H.-T. Yau, and J.~Yin.
\newblock {Bulk universality for generalized Wigner matrices}.
\newblock {\em Probability Theory and Related Fields}, Oct. 2011.

\bibitem{ErdHosY12R}
L.~Erd\H{o}s, H.-T. Yau, and J.~Yin.
\newblock {Rigidity of eigenvalues of generalized Wigner matrices}.
\newblock {\em Advances in Mathematics}, 229(3):1435--1515, Feb. 2012.

\bibitem{GoetzeN12S}
F.~G{\"o}tze, A.~Naumov, and A.~Tikhomirov.
\newblock Semicircle law for a class of random matrices with dependent entries.
\newblock {\em arXiv preprint arXiv:1211.0389}, 2012.

\bibitem{Hamilton44Q}
W.~R. Hamilton.
\newblock On quaternions; or on a new system of imaginaries in algebra.
\newblock {\em The London, Edinburgh, and Dublin Philosophical Magazine and
  Journal of Science}, 25(163):10--13, 1844.

\bibitem{KnowlesY11E}
A.~Knowles and J.~Yin.
\newblock {Eigenvector distribution of Wigner matrices}.
\newblock {\em Probability Theory and Related Fields}, (November), Dec. 2011.

\bibitem{Mehta04R}
M.~L. Mehta.
\newblock {\em {Random matrices, Third Edition}}.
\newblock Academic Press, 2004.

\bibitem{TaoV11R}
T.~Tao and V.~Vu.
\newblock {Random matrices: Universality of local eigenvalue statistics}.
\newblock {\em Acta Mathematica}, 206(1):127--204, Mar. 2011.

\bibitem{TaoV11W}
T.~Tao and V.~Vu.
\newblock {The Wigner-Dyson-Mehta bulk universality conjecture for Wigner
  matrices}.
\newblock {\em Electronic Journal of Probability}, 16(77):2104--2121, 2011.

\bibitem{TulinoV04R}
A.~Tulino and S.~Verd{\'u}.
\newblock {\em Random matrix theory and wireless communications}, volume~1.
\newblock Now Publishers Inc, 2004.

\bibitem{Wigner55C}
E.~P. Wigner.
\newblock {Characteristic vectors of bordered matrices with infinite
  dimensions}.
\newblock {\em Annals of Mathematics}, 62(3):548--564, 1955.

\bibitem{Zhang97Q}
F.~Zhang.
\newblock {Quaternions and matrices of quaternions}.
\newblock {\em Linear algebra and its applications}, 251:21--57, 1997.

\end{thebibliography}

\end{document}